\documentclass{article}

\usepackage{amsmath}
\usepackage{amssymb}
\usepackage{amsthm}
\usepackage{amsrefs}
\usepackage{color}
\usepackage{dsfont}
\usepackage{mathrsfs}
\usepackage{stmaryrd}
\usepackage[all]{xy}
\usepackage[mathcal]{eucal}
\usepackage{tikz-cd}
\usepackage{mathtools}
\usepackage{verbatim}  
\usepackage{fullpage}  
\usepackage{soul,xcolor}
\usepackage{float}
\usepackage{caption}

\clubpenalty=9999
\widowpenalty=9999

\newcommand{\N}{\mathbb{N}}

\newcommand{\R}{\mathbb{R}}
\newcommand{\Z}{\mathbb{Z}}

\renewcommand{\phi}{\varphi}

\newcommand{\excise}[1]{}

\newcommand{\abs}[1]{\left \vert {#1} \right \vert}
\newcommand{\s}[1]{\left\{ {#1} \right\}}

\newcommand{\parens}[1]{\left( {#1} \right)}
\newcommand{\defeq}{\vcentcolon=}


\newtheorem{theorem}{Theorem}

\newtheorem{proposition}[theorem]{Proposition}
\newtheorem{lemma}[theorem]{Lemma}
\newtheorem{corollary}[theorem]{Corollary}

\theoremstyle{definition}


\title{Geometry of non-transitive graphs}
\author{Josiah Oh and Mark Pengitore}

\begin{document}

\maketitle

\begin{abstract}
In this note, we study non-transitive graphs and prove a number of results when they satisfy a coarse version of transitivity. Also, for each finitely generated group $G$, we produce continuum many pairwise non-quasi-isometric regular graphs that have the same growth rate, number of ends, and asymptotic dimension as $G$.
\end{abstract}

\section{Introduction}
Woess \cite{We} asked the following natural question: does every transitive graph ``look like" a Cayley graph? To be more precise, is every  connected, locally finite, vertex-transitive graph quasi-isometric to a Cayley graph of some finitely generated group? In  \cite{DL}, Diestel and Leader constructed infinite, vertex-transitive graphs of exponential growth, denoted $\text{DL}(m,n)$, and conjectured that these graphs would provide a negative answer to Woess' question. In a series of papers, Eskin, Fisher, and Whyte \cites{EFW1, EFW2, EFWA} confirmed this by demonstrating that $\text{DL}(m,n)$ is not quasi-isometric to any finitely generated group when $m \neq n$. They also constructed a class of non-unimodular, three-dimensional, solvable, non-nilpotent Lie groups that do not admit a nonpositively curved, left-invariant metric, and showed that these groups are not quasi-isometric to any finitely generated group.
 
The spaces in both of the above collections all have exponential volume growth, which leads us to ask: does there exist a nilpotent Lie group or a vertex-transitive graph of polynomial growth that is not quasi-isometric to any finitely generated group? We note that there are uncountably many pairwise non-quasi-isometric Carnot groups (recall that Carnot groups are examples of simply connected nilpotent Lie groups). We also note that any finitely generated group which is quasi-isometric to a nilpotent Lie group is virtually nilpotent by Gromov's polynomial growth theorem \cite{G}, and hence is quasi-isometric to a finitely generated nilpotent group. Now, all finitely generated nilpotent groups are finitely presented groups and there are only countably many of these. Therefore, by a counting argument, there exist many simply connected nilpotent Lie groups that are not quasi-isometric to any finitely generated group. As for locally finite vertex-transitive graphs of polynomial growth, Trofimov \cite{VIT} had already demonstrated, even before Woess asked his question, that such graphs are quasi-isometric to finitely generated nilpotent groups.
 
Given the above discussion, we choose to look beyond the world of vertex-transitive graphs. Since the class of non-vertex-transitive graphs is so large, one expects that there are many graphs with geometric properties that are not shared by Cayley graphs of finitely generated groups. Thus, we aim to find a class of non-vertex-transitive graphs that are as close as possible to being quasi-isometric to finitely generated groups. We start our discussion by considering a class of graphs that satisfy a coarse notion of vertex-transitivity. To this end, we introduce the following definition. We say that a graph $X$ is coarsely transitive if there exists a constant $K \ge 1$ such that for any two vertices $x$ and $y$, there exists a $(K,K)$-quasi-isometry $X\to X$ which maps $x$ to within $K$-distance of $y$. One can see that every vertex-transitive graph is coarsely transitive, but conversely, one can construct a coarsely transitive graph which is not vertex-transitive by starting with any vertex-transitive graph and attaching a new vertex to it. Thus, one may ask what properties of vertex-transitive graphs pass to coarsely transitive graphs. With this in mind, we come to our first result. \\
 
\noindent {\bf Theorem~\ref{coarse separation corollary}  }
{\it Let $X$ be a coarsely transitive graph with two ends. Then $X$ is quasi-isometric to $\Z$.}\\
 
One may view this result as the coarsely transitive generalization of the well known fact, due to Freudenthal and Hopf, that all two-ended finitely generated groups are finite extensions of the integers. Freudenthal and Hopf also proved that finitely generated groups can only have zero, one, two, or infinitely many ends. Our next theorem shows that this phenomenon generalizes to coarsely transitive graphs.\\
 
\noindent {\bf Theorem~\ref{ends_coarse_transitive} }
{\it A coarsely transitive graph has either zero, one, two, or infinitely many ends.}\\

Although every locally finite vertex-transitive graph of polynomial growth is quasi-isometric to a finitely generated nilpotent group, we do find continuum many locally finite regular graphs with integral degree of polynomial growth, one or two ends, and finite asymptotic dimension, which are not quasi-isometric to any finitely generated group.\\

\noindent {\bf Theorem~\ref{main theorem}  }
{\it Given an infinite, locally finite, connected, vertex-transitive graph $X$, there exist continuum many pairwise non-quasi-isometric 3-regular graphs that have the same growth rate, number of ends, and asymptotic dimension as $X$.
 
In particular, for any infinite, finitely generated nilpotent group $G$, there exist continuum many pairwise non-quasi-isometric 3-regular graphs that have the same degree of polynomial growth, number of ends, and asymptotic dimension as $G$.  }\\

The proof of Theorem \ref{main theorem} proceeds by attaching line segments to our base Cayley graph $X$ along an infinite geodesic ray in the following way. After fixing a base point and a parameter $\alpha \in (0,1]$, we attach a segment of length $\lceil\log(n)^{\alpha} \rceil$ to the vertex on the ray at distance $n^2$ from the ray's endpoint. Calling this graph $X_\alpha$, we then demonstrate that the image of any quasi-isometric embedding of $X$ into $X_\alpha$ lies in a bounded neighborhood of $X \subset X_\alpha$. Since the attached segments along the ray grow without bound, it then follows that $X_\alpha$ and $X$ are not quasi-isometric. Moreover, the parameter $\alpha$ controls the growth rate of the attached line segments in such a way that $X_\alpha$ and $X_\beta$ are not quasi-isometric for distinct $\alpha,\beta$ in $(0, 1].$ On the other hand, since the attached segments are sparse and grow slowly in length, the graphs $X_\alpha$ share several large-scale geometric properties with $X$. 

\section{Notation and Basic Definitions}
For a metric space $X$, we use $d(x,y)$ to denote the distance between $x$ and $y$. We denote the $r$-ball about $x$ by $B_{X}(x,r)$ and the $r$-sphere about $x$ in $X$ by $S_{X}(x,r)$. When the metric space $X$ is clear from context, we simply write $B(x,r)$ and $S(x,r)$.

Let $f \colon (X, d_X) \to (Y, d_Y)$ be a map of metric spaces. We say that $f$ is an $(L,A)$\textbf{-quasi-isometric embedding} if there are constants $L \geq 1$, $A \geq 0$ such that for every $a,b \in X$,
$$
\frac{1}{L} d_X(a,b) - A \leq d_Y(f(a), f(b)) \leq L d_X(a,b) + A.
$$
An $(L,A)$-quasi-isometric embedding $f$ is an $(L,A)$-\textbf{quasi-isometry} if there is an $(L',A')$-quasi-isometric embedding $g : Y \to X$ such that $d_X(g\circ f,\text{Id}_X) < \infty$ and $d_Y(f\circ g,\text{Id}_Y) < \infty$, and we call $g$ a \textbf{quasi-inverse} of $f$. Equivalently, an $(L,A)$-quasi-isometric embedding $f$ is an $(L,A)$-quasi-isometry if it is \textbf{coarsely surjective}, that is, if there is a $C \ge 0$ such that the image of $f$ is $C$-dense in $Y$. A map $f \colon X \to Y$ is a \textbf{quasi-isometry} between $X$ and $Y$ if it is an $(L,A)$-quasi-isometry for some $L \ge 1, A \ge 0$. Two metric spaces $X$ and $Y$ are \textbf{quasi-isometric} if there exists a quasi-isometry between them.

A \textbf{graph} is a pair of sets $X = (V,E)$ where $E \subset V \times V$. We call $V$ the set of \textbf{vertices} and $E$ the set of \textbf{edges}. We denote the vertices of a graph $X$ as $V(X)$ and the edges of a graph as $E(X)$. Given an edge $\s{x,y}$, we call $x$ and $y$ the endpoints of $\s{x,y}$ and we say that $x$ and $y$ are adjacent. A \textbf{graph isomorphism} between graphs $X$ and $Y$ is a bijection $f : V(X) \to V(Y)$ such that $x$ and $y$ are adjacent in $V(X)$ if and only if $f(x)$ and $f(y)$ are adjacent in $V(Y)$. A \textbf{graph automorphism} of a graph $X$ is a graph isomorphism from $X$ to itself. A graph is \textbf{vertex-transitive} (or simply \textbf{transitive}) if its automorphism group acts transitively on its vertices. 

A graph is \textbf{connected} if any two vertices can be connected by a path. For any connected graph $X$, a natural metric is induced on the set of vertices by defining the distance between two vertices as the length of a shortest path between them. Since we are mainly interested in viewing graphs as metric spaces, we use the symbol $X$ to denote both the graph and the corresponding metric space. If $S \subset V(X)$, then the \textbf{subgraph of $X$ induced by $S$} is the graph whose vertex set is $S$ and whose edge set is the subset of edges in $E(X)$ that have both endpoints in $S$. We reuse the symbol $S$ to denote this induced subgraph, and we use $X \setminus S$ to denote the subgraph of $X$ induced by $V(X) \setminus S$. For the entirety of this note, we only consider graphs that are connected and unbounded as metric spaces.

Next we recall the definitions of some large-scale geometric properties of graphs. For a graph $X$ and subgraph $S$, let $U(X,S)$ denote the set of unbounded connected components of $X \setminus S$. Letting $X$ be a connected graph, we define the \textbf{number of ends of $X$} to be
\[
    e(X) = \sup\s{\abs{U(X,B)} : B \text{ is a bounded subgraph of } X}.
\]
Note that in particular, a graph has zero ends if and only if it is bounded. For graphs $X$ that are connected and unbounded, there is an equivalent definition in terms of rays. A \textbf{ray} in $X$ is a semi-infinite simple path; that is, it is an infinite sequence of vertices $v_0,v_1,\dots$ such that each vertex appears at most once in the sequence and every two consecutive vertices are adjacent. Two rays $r_1$ and $r_2$ are said to be equivalent if there is a ray $r_3$ that contains infinitely many of the vertices in each of $r_1$ and $r_2$. This defines an equivalence relation on the set of rays in $X$. Then the \textbf{ends} of $X$ are defined to be the equivalence classes of rays in $X$, and $e(X)$ is equal to the cardinality of the set of ends of $X$.

Let $X$ be a metric space, and let $n \ge 0$ be an integer. We say that asdim($X) \le n$ if for every $R \ge 1$ there is a uniformly bounded cover $\mathcal{U}$ of $X$ such that every ball in $X$ of radius $R$ intersects at most $n+1$ elements of $\mathcal{U}$ (here $\mathcal{U}$ is uniformly bounded if $\sup_{U \in \mathcal{U}} \text{diam}(U) < \infty$). Then the \textbf{asymptotic dimension} of $X$, denoted by asdim($X$), is the smallest integer $n \ge 0$ such that asdim($X) \le n$. If no such $n$ exists, we define asdim($X) = \infty$.

Let $f,g : \N \to \N$ be increasing functions. We write $f \preceq g$ if there is a $c \in \N$ such that $f(n) \le cg(cn+c)$ for all $n \in \N$. If $f \preceq g$ and $g \preceq f$, then we write $f \approx g$ and say that $f$ and $g$ are (asymptotically) equivalent. Note that $\approx$ defines an equivalence relation on the set of increasing functions $\N \to \N$. Suppose that $X$ is an unbounded, locally finite graph, and fix a vertex $x_0 \in X$. Let $f_{X,x_0} : \N \to \N$ be defined by $f_{X,x_0}(n) = \abs{B_X(x_0,n)}$. Observe that if $x_1$ is another vertex and $c = d(x_0,x_1)$, then $B(x_1,n) \subset B(x_0,n+c)$. So $f_{X,x_1}(n) \le f_{X,x_0}(n+c)$, which means $f_{X,x_1} \preceq f_{X,x_0}$. By symmetry we get $f_{X,x_0} \approx f_{X,x_1}$. Hence the equivalence class of $f_{X,x_0}$, which we define to be the \textbf{growth rate} of $X$, does not depend on the choice of $x_0$. Thus we can talk about the growth function $f_X$ of $X$, which is well-defined up to equivalence. In particular, two graphs $X$ and $Y$ are said to have the same growth rate if their growth functions $f_X$ and $f_Y$ are equivalent, that is, $f_X \approx f_Y$. If $f_X \preceq n^d$ for some integer $d \ge 0$, then we say that $X$ has a \textbf{polynomial growth rate}. In this case, the smallest $d$ for which $f_X \preceq n^d$ is called the \textbf{order of polynomial growth}.

Recall that growth rate, number of ends, and asymptotic dimension are invariant under quasi-isometry.

\section{Coarsely Transitive Graphs}

Let $K \ge 1$. A graph $X$ is said to be \textbf{$K$-coarsely transitive} if for any pair of vertices $x,y$ in $X$, there exists a $(K,K)$-quasi-isometry $f \colon X \to X$ such that $d(f(x),y) \le K$. A graph is said to be \textbf{coarsely transitive} if it is $K$-coarsely transitive for some $K \ge 1$. Note that all transitive graphs are coarsely transitive.

Suppose $X$ is $K$-coarsely transitive, and take vertices $x$ and $y$ in $X$. By definition, there is a $(K,K)$-quasi-isometry $f:X\to X$ with $d(f(x),y) \le K$. Define $f' \colon X \to X$ by $f'(x) = y$ and $f'(z)=f(z)$ for all $z \ne x$. Then $f'$ is a $(K,2K)$-quasi-isometry with $f'(x) = y$. Hence, we obtain the following lemma.

\begin{lemma}
A metric space $X$ is coarsely transitive if and only if there is some $K \ge 1$ such that for any pair of vertices $x,y$ in $X$, there is a $(K,K)$-quasi-isometry $f:X\to X$ with $f(x) = y$.
\end{lemma}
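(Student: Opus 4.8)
The plan is to prove the two implications separately, with the nontrivial direction being essentially the one-point modification argument already sketched in the paragraph preceding the statement.

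For the ``if'' direction, suppose such a $K$ exists. Given vertices $x,y$, the hypothesized $(K,K)$-quasi-isometry $f : X \to X$ with $f(x) = y$ trivially satisfies $d(f(x),y) = 0 \le K$, so $X$ is $K$-coarsely transitive by definition. Nothing further is needed here.

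For the ``only if'' direction, suppose $X$ is $K$-coarsely transitive, fix vertices $x,y$, and choose a $(K,K)$-quasi-isometry $f : X \to X$ with $d(f(x),y) \le K$. Define $f' : X \to X$ by $f'(x) = y$ and $f'(z) = f(z)$ for $z \ne x$. First I would verify that $f'$ is a $(K,2K)$-quasi-isometric embedding: for pairs $a,b$ with $a,b \ne x$ this is immediate since $f' = f$ there, and for pairs involving $x$ one applies the triangle inequality together with $d(f(x),y) \le K$ to absorb the extra error into the additive constant, pushing $K$ up to $2K$. Next I would check that $f'$ is coarsely surjective: if $f$ has $C$-dense image, then every point of $X$ lies within $C$ of some point of $\mathrm{Im}(f)$, and since the only value that has changed is $f(x)$, which has been replaced by $y$ lying within $K$ of $f(x)$, the image $\mathrm{Im}(f')$ is $(C+K)$-dense. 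By the characterization of quasi-isometries recalled in Section~2, $f'$ is therefore a $(K,2K)$-quasi-isometry. Since $2K \ge K \ge 1$, it is in particular a $(2K,2K)$-quasi-isometry, so taking $K' = 2K$ gives a uniform constant witnessing the desired property: for every pair of vertices $x,y$ there is a $(K',K')$-quasi-isometry $X \to X$ sending $x$ to $y$.

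I expect no serious obstacle; the only points requiring care are the constant bookkeeping (tracking that the single-point modification costs only an additive $K$, and that a $(K,2K)$-quasi-isometry is subsumed by the uniform class with $K' = 2K$) and the observation that coarse surjectivity survives the modification — which could a priori fail if $f(x)$ were the unique preimage of some point not otherwise approximated, but is rescued precisely because $y$ is within $K$ of $f(x)$.
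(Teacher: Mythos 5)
Your proof is correct and follows essentially the same route as the paper, which establishes the lemma via exactly this one-point modification $f'(x)=y$, $f'(z)=f(z)$ for $z\ne x$, yielding a $(K,2K)$-quasi-isometry. Your additional checks (coarse surjectivity of $f'$ and absorbing $(K,2K)$ into the uniform class $(2K,2K)$) are the right bookkeeping details and present no issues.
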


Moving forward, we take $K$-coarsely transitive to mean this equivalent condition.

First, we observe a basic obstruction to coarse transitivity. By an abuse of notation, we use $\mathbb{Z}^d$ to denote the Cayley graph of $\Z^d$ with respect to the standard symmetric generating set. Let $B_n = B(0,n)$ be the subgraph of $\Z$ induced by the vertex set $\s{k \in \Z : |k| \le n}$.

\begin{lemma}
Let $L\ge 1$ and $A \ge 0$ be given. For all sufficiently large $n$, if $f : B_n \to \mathbb{Z}$ is an $(L,A)$-quasi-isometric embedding, then either $f(-n) < f(0) < f(n)$ or $f(n) < f(0) < f(-n)$.
\end{lemma}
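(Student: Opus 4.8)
The plan is to prove the stronger, quantitative statement that the conclusion holds for every $n > L^2 + 2LA$. I would first record the trivial but essential consequence of the definition that $d(f(-n), f(n)) \ge \tfrac1L d_{B_n}(-n,n) - A = \tfrac{2n}{L} - A$, which is strictly positive in this range of $n$; in particular $f(-n) \ne f(n)$, so one of $f(-n), f(n)$ is strictly larger than the other. It then suffices to rule out the situation in which $f(0)$ lies on the wrong side of both. Using the reflection isometry $\sigma\colon B_n \to B_n$, $\sigma(k) = -k$, and the negation isometry $t \mapsto -t$ of $\Z$ (both of which preserve being an $(L,A)$-quasi-isometric embedding), I can pre- and post-compose $f$ so as to reduce to a single case: assume $f(0) \le f(-n) \le f(n)$ and derive a contradiction.

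So suppose $f(0) \le f(-n) \le f(n)$, and set $a = f(-n)$; by the first paragraph, $f(0) \le a < f(n)$. The key idea is to use the entire sub-path $0, 1, \dots, n$ of $B_n$ rather than only the vertices $-n, 0, n$. Consecutive vertices of this sub-path satisfy $d(f(i), f(i+1)) \le L + A$, so the integer sequence $f(0), f(1), \dots, f(n)$ moves by at most $L + A$ per step; since it starts at $f(0) \le a$ and ends at $f(n) > a$, a discrete intermediate value argument yields a first index $j \in \{1, \dots, n\}$ with $f(j) > a$. Then $f(j-1) \le a$, so $a < f(j) \le f(j-1) + (L+A) \le a + (L+A)$, whence $d(f(j), f(-n)) \le L + A$.

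On the other hand $d_{B_n}(j, -n) = j + n \ge n$, so applying the lower bound of the quasi-isometric embedding to the pair $j, -n$ gives $d(f(j), f(-n)) \ge \tfrac1L(j+n) - A \ge \tfrac{n}{L} - A$. Combining, $\tfrac nL - A \le L + A$, i.e.\ $n \le L^2 + 2LA$, contradicting our choice of $n$. This completes the reduction and hence the lemma.

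I expect the only real obstacle is recognizing what does \emph{not} work: comparing just the three pairwise distances among $f(-n), f(0), f(n)$ is not enough to pin down the order when $L \ge \sqrt{2}$, because the multiplicative distortion can absorb the discrepancy. One must instead exploit that the image of the sub-path from $0$ to $n$ is $(L+A)$-coarsely connected, so it cannot ``pass over'' the value $f(-n)$ without landing within $L+A$ of it --- at a vertex $j$ that is nevertheless at distance $\ge n$ from $-n$ in $B_n$, which is exactly the tension the quasi-isometry inequalities forbid.
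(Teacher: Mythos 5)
Your proof is correct and takes essentially the same approach as the paper: both arguments rest on the observation that consecutive images along the sub-path from $0$ to $\pm n$ differ by at most $L+A$ while $f(\mp n)$ lies at distance greater than $L+A$ from every such image point, with the identical threshold $n > L^2 + 2LA$. The paper runs this as a direct induction ($f(-n) < f(k)$ implies $f(-n) < f(k+1)$), whereas you phrase it as a first-crossing contradiction after a symmetry reduction; these are logically the same argument.
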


\begin{proof}
Letting $n > L^2+2LA$, we have that for each $k=-n,-n+1,\dots,0$,
\[
    d(f(n),f(k)) \ge 
    \tfrac{1}{L}(n-k) - A \ge
    \tfrac{1}{L}n - A >
    L+A.
\]
Similarly, $d(f(-n),f(k)) > L+A$ for each $k=0,1,\dots,n$. On the other hand, for each $k=-n,-n+1,\dots,n-1$, we have
\[
    d(f(k),f(k+1)) \le L+A.
\]
First, suppose that $f(-n) < f(0)$. If $m \le f(-n)$, then
\[
    d(m,f(0)) \ge
    d(f(-n),f(0)) >
    L+A.
\]
Since $d(f(1),f(0)) \le L+A$, we must have $f(-n) < f(1)$. By induction we get $f(-n) < f(n)$. Now, if $f(n) < f(0)$, then by a similar argument, we would get $f(n) < f(-n)$ which contradicts $f(-n) < f(n)$. Thus, $f(0) < f(n)$, and we have $f(-n) < f(0) < f(n)$, as desired.

Now suppose that $f(0) < f(-n)$. Then by a symmetric argument, we get $f(n) < f(0) < f(-n)$.
\end{proof}

Let $T_n$ be the subgraph of $\Z^2$ induced by the vertex set $\s{(k,0) : |k| \le n} \cup \s{(0,k) : 0 \le k \le n}$. Then $T_n$ can be thought of as a tripod with legs of length $n$.

\begin{proposition} \label{obstruction to coarse transitivity}
Let $L \ge 1$ and $A \ge 0$ be given. Then for all sufficiently large $n$, there does not exist an $(L,A)$-quasi-isometric embedding $T_n \to \Z$.
\end{proposition}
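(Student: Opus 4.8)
The plan is to exploit the preceding lemma on $(L,A)$-quasi-isometric embeddings $B_n \to \Z$ by noting that the tripod $T_n$ is assembled from three overlapping isometric copies of $B_n$. First I would fix notation: let $o = (0,0)$ be the center of $T_n$ and let $a = (-n,0)$, $b = (n,0)$, $c = (0,n)$ be the three leg-endpoints. The first step is the observation that the union of any two of the three legs is a path of length $2n$ with $o$ as its midpoint, and that a shortest path in $T_n$ between two vertices lying on two given legs never needs to enter the third leg; hence the subgraph of $T_n$ spanned by any two of the legs, equipped with its induced metric, is isometric to $B_n$ via an isometry sending $o$ to $0$ and the two leg-endpoints to $\pm n$.

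The main argument then runs as follows. Suppose toward a contradiction that $f\colon T_n \to \Z$ is an $(L,A)$-quasi-isometric embedding for some $n$ exceeding the threshold in the proof of the preceding lemma (namely $n > L^2 + 2LA$). Restricting $f$ to each of the three copies of $B_n$ identified above produces three $(L,A)$-quasi-isometric embeddings $B_n \to \Z$, so the lemma applies in all three cases. Its conclusion, transported through our isometries, says exactly that $f(o)$ lies strictly between $f(a)$ and $f(b)$, strictly between $f(a)$ and $f(c)$, and strictly between $f(b)$ and $f(c)$. These three betweenness conditions already force $f(a), f(b), f(c)$ to be pairwise distinct, and after relabeling so that $f(a) < f(b) < f(c)$ they demand both $f(a) < f(o) < f(b)$ and $f(b) < f(o) < f(c)$, which is impossible. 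This contradiction shows that no $(L,A)$-quasi-isometric embedding $T_n \to \Z$ exists once $n$ is sufficiently large, which is the claim.

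I do not expect a serious obstacle here. The only slightly delicate point is the geometric claim that shortest paths between points of two legs stay within those two legs, so that the spanned subgraphs are genuinely isometric — not merely quasi-isometric — to $B_n$; this is exactly what permits applying the lemma with the same constants $L$ and $A$. Everything else is bookkeeping, in particular matching ``sufficiently large $n$'' to the explicit bound coming from the preceding lemma.
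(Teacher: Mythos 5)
Your proposal is correct and follows essentially the same route as the paper: decompose $T_n$ into the three isometric copies of $B_n$ formed by pairs of legs, apply the preceding lemma to each restriction of $f$, and derive a contradiction from the resulting three betweenness conditions on $f(0,0)$. The only difference is cosmetic (you relabel so that $f(a)<f(b)<f(c)$, while the paper normalizes one pair and propagates), and your remark that the two-leg subgraphs are genuinely isometric to $B_n$ is the same observation the paper makes in its first line.
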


\begin{proof}
The union of any two legs of $T_n$ is isometric to $B_n$. Therefore, $T_n$ contains three distinct subgraphs, $S_1,S_2$, and $S_3$, each of which is isometric to $B_n$. Suppose for contradiction that there is an $(L,A)$-quasi-isometric embedding $f : T_n \to \Z$. Then for $i=1,2,3$, the restriction $f\vert_{S_i}$ is an $(L,A)$-quasi-isometric embedding of $S_i \cong B_n$ into $\Z$. Using the previous lemma, we may assume without loss of generality that $f(-n,0) < f(0,0) < f(n,0)$. Then $f(-n,0) < f(0,0)$ implies $f(0,0) < f(0,n)$, and $f(0,0) < f(n,0)$ implies $f(0,n) < f(0,0)$. This is impossible, so no such $f$ exists.
\end{proof}

Hence, a graph which has arbitrarily large parts which coarsely look like $T_n$ and $B_n$ cannot be coarsely transitive. For example, the subgraph of $\Z^2$ induced by the vertex set $\s{(x,y) : \abs y \le \abs x}$ is not coarsely transitive, because for each $K \ge 1$, there is an $n \gg k$ such that no $(K,K)$-quasi-isometry which maps $(n,0)$ to $(0,0)$ exists.

It is known that connected transitive graphs which are unbounded have either one, two, or infinitely many ends \cite{DJM}, and moreover that two-ended transitive graphs are quasi-isometric to $\Z$ \cite{MR}. We show that these two properties extend to coarsely transitive graphs.

\begin{proposition} \label{coarse separation}
Let $X$ be a coarsely transitive graph with at least two ends, and let $B_0 = B(x_0,r)$ be a ball with $\abs{U(X,B_0)} \ge 2$. Then there is an $R > 0$ such that for any ball $B$ of radius $R$, we have $\abs{U(X,B)} \ge \abs{U(X,B_0)}$.
\end{proposition}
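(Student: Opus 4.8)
The plan is a \emph{coarse separation} argument: use coarse transitivity to transport the separating ball $B_0$ onto the centre of an arbitrary ball $B = B(y,R)$, and then transfer the splitting of $X \setminus B_0$ to a splitting of $X \setminus B$. Fix $K$ such that $X$ is $K$-coarsely transitive, set $n = \abs{U(X,B_0)} \ge 2$, and let $C_1, \dots, C_n$ be the unbounded components of $X \setminus B_0$; recall that any path in $X$ joining a vertex of $C_i$ to a vertex of $C_j$ with $i \ne j$ must meet $B_0$.

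The first step, which I expect to be the main obstacle, is to arrange that the witnessing quasi-isometries come with \emph{uniformly controlled quasi-inverses}: to produce a constant $D < \infty$, depending only on $X$ and $K$, such that for every vertex $y$ there is a $(K,K)$-quasi-isometry $f_y \colon X \to X$ with $f_y(x_0) = y$ admitting a quasi-inverse $g_y$ which is a $(D,D)$-quasi-isometry with $d(g_y \circ f_y, \text{Id}) \le D$ and $d(f_y \circ g_y, \text{Id}) \le D$. Some such uniformity is indispensable: without it the radius $R$ obtained below would depend on $y$, and the conclusion indeed fails for multi-ended graphs that are not coarsely transitive (e.g.\ the infinite tripod, where a ball sitting deep inside one leg meets only two of its three ends). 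I would expect this uniform bound to be precisely the place where coarse transitivity is genuinely used.

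Granting $D$, fix $B = B(y,R)$ with $R$ and an auxiliary radius $\rho$ to be chosen at the end, and take $f = f_y$, $g = g_y$ as above. In each $C_i$ pick an unbounded connected subgraph $C_i' \subseteq C_i$ disjoint from $B(x_0,\rho)$ (for instance the tail of a ray in $C_i$ beyond $B(x_0,\rho)$). For $c \in C_i'$ one has $d(c,x_0) > \rho$, hence $d(f(c),y) > \rho/K - K$; filling in a geodesic of length at most $2K$ between the $f$-images of consecutive vertices along a path in $C_i'$ produces a connected subgraph of $X$ lying outside $B(y, \rho/K - 3K)$. Thus, as soon as $\rho/K - 3K \ge R$, the set $f(C_i')$ lies in a single connected component $E_i$ of $X \setminus B$, and $E_i$ is unbounded since $f$ is a quasi-isometric embedding and $C_i'$ is unbounded.

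It remains to show the $E_i$ are pairwise distinct, for then $\abs{U(X,B)} \ge n$. Suppose $E_i = E_j$ with $i \ne j$; then some path $P$ in $X \setminus B$ runs from $f(u)$ to $f(v)$ with $u \in C_i'$ and $v \in C_j'$. Applying $g$ and filling in geodesics of length at most $2D$ between the $g$-images of consecutive vertices of $P$ produces a connected subgraph $Q \subseteq X$; since every vertex $q$ of $P$ has $d(q,y) > R$ and $d(g(y),x_0) \le D$, every vertex of $Q$ lies outside $B(x_0, R/D - 4D)$. Also $d(g(f(u)),u) \le D$ and $u \in C_i' \subseteq X \setminus B(x_0,\rho)$, so if $\rho > r + D$ then a path of length at most $D$ from $u$ avoids $B_0$, whence $g(f(u))$ lies in the component $C_i$ of $X \setminus B_0$; likewise $g(f(v)) \in C_j$. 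Hence, provided $R > D(r + 4D)$, the subgraph $Q$ is contained in $X \setminus B_0$ and joins $C_i$ to $C_j$, contradicting that they are distinct components. The constraints $R > D(r+4D)$, $\rho > r + D$, $\rho \ge K(R + 3K)$ are met by taking $R := D(r+4D) + 1$ and $\rho := K(R+3K) + r + D + 1$; since $R$ depends only on $r$, $K$, $D$ and not on $y$, this is exactly the uniform $R$ the proposition asks for. Apart from the first step, everything here is the routine bookkeeping of chasing a ball and a path through a quasi-isometry and its quasi-inverse.
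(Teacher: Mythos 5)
There is a genuine gap, and it sits exactly where you flagged it: the uniform quasi-inverse constant $D$. You write ``Granting $D$\dots'' and never produce it, but nothing in the definition of $K$-coarse transitivity bounds the quasi-inverse (equivalently, the coarse-surjectivity) constants of the maps $f_y$ in terms of $K$. Indeed, a space can admit $(1,0)$-quasi-isometric self-embeddings whose images are only $C$-dense for arbitrarily large $C$ (e.g.\ isometrically embedding a regular tree onto a deep subtree of itself), so a uniform $D$ is not a formal consequence of having $(K,K)$ forward constants, and no argument is offered for extracting it from coarse transitivity. Since in your scheme the final radius must satisfy $R > D(r+4D)$, the entire conclusion hinges on this unproved uniformity, and the rest of the bookkeeping, which is otherwise fine, cannot rescue it.

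The paper's proof is engineered precisely to sidestep this. Rather than transporting $B_0$ onto the arbitrary ball $B=B(x_1,R)$, it transports $B$ onto $B_0$: it takes a $(K,K)$-quasi-isometry $f$ with $f(x_1)=x_0$ and shows $f$ induces a well-defined map $U(X,B)\to U(X,B_0)$, sending each unbounded component of $X\setminus B$ into a single unbounded component of $X\setminus B_0$. This well-definedness uses only the forward constants $(K,K)$, so the radius $R=Kr+3K^2$ is uniform in the center $x_1$. The inequality $\abs{U(X,B)}\ge\abs{U(X,B_0)}$ then follows from \emph{surjectivity} of this induced map, and here the quasi-inverse $g$ is used only qualitatively: its (uncontrolled) constants enter only through an auxiliary radius $R'$ internal to the surjectivity argument, never through $R$ itself. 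If you want to keep your direction of travel, you would need to first prove a lemma giving uniformly controlled quasi-inverses for coarsely transitive graphs; otherwise, reversing the roles of $B$ and $B_0$ as in the paper is the correct fix.
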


\begin{proof}
Suppose $X$ is $K$-coarsely transitive. Set $R = Kr + 3K^2$, and let $B_1 = B(x_1,R)$ where $x_1$ is arbitrary. Since $X$ is $K$-coarsely transitive, there is a $(K,K)$-quasi-isometry $f \colon X \to X$ with $f(x_1) = x_0$. We will show that $f$ induces a surjection $U(X,B_1) \to U(X,B_0)$. First, we observe that
\begin{align}
    y \notin B_1 \implies
    d(x_0,f(y)) =
    d(f(x_1),f(y)) \ge 
    \tfrac{1}{K}d(x_1,y)-K >
    \tfrac{1}{K}R - K =
    r + 2K.
\end{align}
That is, $f$ maps the complement of $B_1$ to the complement of $B(x_0,r+2K)$. Let $C \in U(X,B_1)$, and pick any $y \in C$. Then $f(y) \notin B_0$. Hence, let $D$ be the connected component of $X \setminus B_0$ which contains $f(y)$. We now show that $f(C) \subset D$. Let $z \in C$ be adjacent to $y$. Then
\[
    d(f(y),f(z)) \le 
    Kd(y,z) + K =
    2K.
\]
Let $\gamma$ be a path of minimal length in $X$ between $f(y)$ and $f(z)$. If $f(z) \notin D$, then there must exist a point $w \in \gamma \cap B_0$ (see Figure \ref{fig:d1}).

\begin{figure}[H]
    \centering
    \includegraphics[width=65mm]{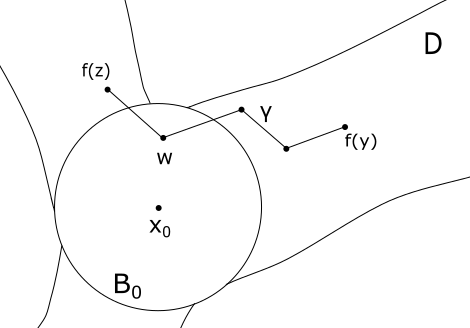}
    \caption{}
    \label{fig:d1}
\end{figure}

In this case,
\[
    d(x_0,f(y)) \le
    d(x_0,w) + d(w,f(y)) \le
    r + \text{length}(\gamma) =
    r + d(f(y),f(z)) \le
    r + 2K,
\]
contrary to (1). Therefore, $f(z) \in D$. Since $C$ is connected, it follows that $f(C) \subset D$. Since $C$ is unbounded and $f$ is a quasi-isometry, $D$ must be unbounded. Therefore, $D \in U(X,B_0)$, and we let $F(C) = D$. Thus we get a well-defined map $F : U(X,B_1) \to U(X,B_0)$ which we will show is surjective. Let $D \in U(X,B_0)$, and let $g$ be a quasi-inverse of $f$. Without loss of generality we may assume that $g(x_0) = x_1$. Like before, take a ball $B_2 = B(x_0,R')$ of sufficiently large radius $R'$ such that
\[
    x \notin B_2 \implies d(x_1,g(x)) > R+2K.
\]
Then by similar reasoning used before, $g$ maps each element of $U(X,B_2)$ into an element of $U(X,B_1)$. Since $D$ is an unbounded component of $X \setminus B_0$, and $B_2$ is just a bounded neighborhood of $B_0$, there must be a $D' \in U(X,B_2)$ with $D' \subset D$. (In fact, $D'$ is just the subset of points in $D$ whose distance from $x_0$ exceeds $R'$). Then like before, $g(D') \subset C$ for some $C \in U(X,B_1)$. Since $f$ and $g$ are quasi-inverses and $D'$ is unbounded, there is a point in $C$ that $f$ maps into $D' \subset D$. Hence $f(C) \subset D$, and therefore $F(C) = D$. Thus, $F$ is surjective, and $\abs{U(X,B_1)} \ge \abs{U(X,B_0)}$. Since $B_1$ was an arbitrary ball of radius $R$, we are done.
\end{proof}

When $X$ is two-ended, we get the following corollary.

\begin{corollary} \label{coarse separation corollary}
If $X$ is a coarsely transitive graph with two ends, then there is an $R>0$ such that every ball $B$ of radius $R$ satisfies $\abs{U(X,B)} = 2$.
\end{corollary}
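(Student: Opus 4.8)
The plan is to obtain this directly from Proposition~\ref{coarse separation}. It suffices to exhibit a single ball $B_0$ with $\abs{U(X,B_0)} \ge 2$: that proposition then produces an $R > 0$ such that every ball $B$ of radius $R$ satisfies $\abs{U(X,B)} \ge \abs{U(X,B_0)} \ge 2$, and since every ball is in particular a bounded subgraph, $\abs{U(X,B)} \le e(X) = 2$ as well, forcing $\abs{U(X,B)} = 2$.

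To produce such a $B_0$, I would first record a monotonicity observation: if $B' \subseteq B''$ are bounded subgraphs of $X$, then $\abs{U(X,B'')} \ge \abs{U(X,B')}$. Indeed, each connected component of $X\setminus B''$ lies in a unique connected component of $X\setminus B'$, and the resulting map $U(X,B'') \to U(X,B')$ is surjective: an unbounded component $C$ of $X\setminus B'$ remains unbounded after deleting the bounded set $B''$, hence contains an unbounded component of $X\setminus B''$ that maps onto it. Now, since $e(X) = 2$ is a finite supremum of nonnegative integers, it is attained: some bounded subgraph $B^*$ has $\abs{U(X,B^*)} = 2$. Enclosing $B^*$ in a ball $B_0$ (possible since $B^*$ is nonempty and bounded), monotonicity gives $\abs{U(X,B_0)} \ge \abs{U(X,B^*)} = 2$. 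Feeding $B_0$ into Proposition~\ref{coarse separation} then finishes the argument as in the first paragraph.

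The corollary is thus an essentially immediate consequence of Proposition~\ref{coarse separation}, and I do not expect any real obstacle. The only ingredient that deserves to be spelled out is the monotonicity of $\abs{U(X,\cdot)}$ under inclusion of bounded subgraphs (equivalently, the fact that balls already realize the supremum defining $e(X)$), which is exactly what allows the passage from an arbitrary bounded subgraph witnessing $e(X) = 2$ to an honest ball of the form required by the hypothesis of that proposition.
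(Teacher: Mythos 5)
Your proof is correct and follows essentially the same route as the paper: exhibit one ball $B_0$ with $\abs{U(X,B_0)}=2$, feed it into Proposition~\ref{coarse separation} to get $\abs{U(X,B)}\ge 2$ for all radius-$R$ balls, and cap the count above by $e(X)=2$. The only difference is that you explicitly justify, via monotonicity of $\abs{U(X,\cdot)}$ under enlarging the deleted bounded set, that the supremum defining $e(X)$ is realized by an actual ball --- a point the paper simply asserts --- and this added detail is sound (in the locally finite setting, where an unbounded component minus a bounded set still has an unbounded component).
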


\begin{proof}
Since $e(X) = 2$, there is a ball $B_0$ with $\abs{U(X,B_0)} = 2$. Thus, Proposition \ref{coarse separation} implies that there is an $R > 0$ such that every ball $B$ of radius $R$ has $\abs{U(X,B)} \ge \abs{U(X,B_0)} = 2$. On the other hand, since $e(X) = 2$, every such ball $B$ has $\abs{U(X,B)} \le 2$.
\end{proof}

We first show that a two-ended, coarsely transitive graph X is quasi-isometric to $\Z$. By Corollary \ref{coarse separation corollary}, any ball of sufficiently large radius (independent of the center point) will roughly separate $X$ into two unbounded components. Thus, we may construct a bi-infinite, pairwise-disjoint sequence of such balls, and this sequence of balls will look like the integers when viewing $X$ from afar. Indeed, the map which sends the integers to the centers of the balls will be the desired quasi-isometry.

\begin{theorem}
Let $X$ be a coarsely transitive graph. If $e(X) = 2$, then $X$ is quasi-isometric to $\Z$.
\end{theorem}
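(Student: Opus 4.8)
The plan is to follow the heuristic sketched just above the statement: use Corollary~\ref{coarse separation corollary} to extract a bi-infinite sequence of pairwise-disjoint balls that ``coarsely looks like $\Z$'', and show that the map $\phi\colon\Z\to X$ sending $n$ to the $n$th center is a quasi-isometry. Throughout, fix $\rho=R$ as given by Corollary~\ref{coarse separation corollary}, so that $X\setminus B(y,\rho)$ has exactly two unbounded connected components for every vertex $y$ (coarse transitivity and $e(X)=2$ will be used only through this corollary).

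The first step is to construct the sequence $(x_k)_{k\in\Z}$. The elementary fact I will use repeatedly is: if $W$ is an unbounded component of $X\setminus B(y,\rho)$ and $t\ge\rho+1$, then $W$ contains a vertex at distance exactly $t$ from $y$ — take a geodesic from $y$ to a far point of $W$, and note that its portion at distance $>\rho$ from $y$ is connected, avoids $B(y,\rho)$, and ends in $W$, hence lies in $W$. Pick $x_0$ arbitrarily, let $W_0^+,W_0^-$ be the two unbounded components of $X\setminus B(x_0,\rho)$, and choose $x_{\pm1}\in W_0^{\pm}$ with $d(x_0,x_{\pm1})=2\rho+1$; then inductively choose $x_{k+1}$ (resp. $x_{k-1}$) at distance exactly $2\rho+1$ from $x_k$ inside the unbounded component of $X\setminus B(x_k,\rho)$ that does \emph{not} contain $x_{k-1}$ (resp. $x_{k+1}$). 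Since consecutive centers are at distance $2\rho+1>2\rho$, consecutive balls are disjoint. The key structural input is a \textbf{separation property}, which I would prove by a local-to-global induction on $|j-k|$: for every $k$, the balls $\{B(x_j,\rho):j<k\}$ all lie in one unbounded component $W_k^-$ of $X\setminus B(x_k,\rho)$ and $\{B(x_j,\rho):j>k\}$ all lie in the other, $W_k^+$; moreover these are nested, $W_j^-\subseteq W_k^-$ and $W_k^+\subseteq W_j^+$ for $j<k$. The only tool the induction needs is the observation that a connected set disjoint from $B(x_k,\rho)$ lies in a single component of its complement, applied to sets such as $W_{k-1}^-\cup B(x_{k-1},\rho)$ and $W_k^+$; this is technically fiddly but routine, and in particular it shows all the balls $B(x_k,\rho)$ are pairwise disjoint.

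Granting the separation property, I would verify $\phi(k)=x_k$ is a quasi-isometric embedding. Concatenating the geodesics $[x_k,x_{k+1}]$ yields $d(\phi(m),\phi(n))\le(2\rho+1)\,|m-n|$. For the lower bound, if $m<n$ then any path from $x_m$ to $x_n$ must meet $B(x_{n-1},\rho)$, so its length is at least $\big(d(x_m,x_{n-1})-\rho\big)+\big(d(x_{n-1},x_n)-\rho\big)=d(x_m,x_{n-1})+1$; inducting on $n-m$ gives $d(\phi(m),\phi(n))\ge|m-n|$. Thus $\phi$ is a $(2\rho+1,0)$-quasi-isometric embedding.

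It remains to prove that $\phi$ is coarsely surjective, and I expect this to be the main obstacle. Let $\mathcal B=\bigcup_k B(x_k,\rho)$; this is a connected subgraph (consecutive balls are joined by an edge along a geodesic), and $\phi$ is coarsely surjective iff $\mathcal B$ is coarsely dense in $X$. Let $\Gamma^+$ be the union of the vertices $x_0,x_1,x_2,\dots$ and the geodesics $[x_k,x_{k+1}]$ joining consecutive ones; it is a connected unbounded set escaping to a single end $\varepsilon^+$ of $X$, and similarly $\Gamma^-$ escapes to an end $\varepsilon^-$. The separation property at $k=0$ forces $\Gamma^{\pm}\setminus B(x_0,\rho)$ into $W_0^{\pm}$ respectively, so $\varepsilon^+\ne\varepsilon^-$ (a ray can meet the finite set $B(x_0,\rho)$ only finitely often); since $e(X)=2$, these are all the ends of $X$. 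Now suppose $\mathcal B$ were not coarsely dense and pick $z$ with $d(z,\mathcal B)>3\rho+1$. Then $B(z,\rho)$ is disjoint from the connected set $\mathcal B\cup\Gamma^+\cup\Gamma^-$, so the latter lies in a single unbounded component $V_1$ of $X\setminus B(z,\rho)$; in particular both ends of $X$ have representative rays eventually contained in $V_1$. But Corollary~\ref{coarse separation corollary} provides a second unbounded component $V_2\ne V_1$ of $X\setminus B(z,\rho)$, and being unbounded it contains a ray $\eta$, which represents one of $\varepsilon^+,\varepsilon^-$; since for a locally finite graph an end determines, for each bounded set, the unique complementary component its rays eventually lie in, that end must ``equal'' $V_1$ as well, contradicting $\eta\subseteq V_2$. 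Hence $\mathcal B$ is coarsely dense, $\phi$ is a quasi-isometry, and $X\qi\Z$. The delicate points are exactly the end-theoretic ones in this last step: that the half-spines $\Gamma^{\pm}$ genuinely reach two distinct ends, and the coherence statement ``one end $\leftrightarrow$ one complementary component'', which is where finiteness of balls (local finiteness of $X$) is needed.
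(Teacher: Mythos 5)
Your construction of the spine $(x_k)$ and your verification that $k\mapsto x_k$ is a quasi-isometric embedding follow essentially the same path as the paper; the only cosmetic difference is in the lower bound, where the paper splits a geodesic from $x_m$ to $x_n$ into edge-disjoint subpaths between consecutive balls, while you induct on $n-m$ using the single separating ball $B(x_{n-1},\rho)$ — both yield $d(x_m,x_n)\ge n-m$. The genuine divergence is in coarse surjectivity. The paper never leaves the ``unbounded components of complements of bounded sets'' picture: it shows by a direct distance estimate that the two tails $\{x_k\}_{k\ge N}$ and $\{x_k\}_{k\le -N}$ lie in unbounded components of $X\setminus B(x,r)$, rules out their lying in the \emph{same} component by observing that this would force $\abs{U(X,B\cup B_m)}\ge 3$ for a far-away ball $B_m$ (contradicting $e(X)=2$), and concludes that the spine must cross $B(x,r)$, putting some $x_k$ within $3r+1$ of $x$. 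You instead pass to the ray description of ends, identify $\varepsilon^{\pm}$ with the escaping half-spines, and get a contradiction from the second unbounded component of $X\setminus B(z,\rho)$ guaranteed by the separation corollary, since any ray in it would have to represent $\varepsilon^+$ or $\varepsilon^-$. Your route is clean, but as you yourself flag, it uses local finiteness in three places: extracting genuine rays from the (possibly self-intersecting) half-spine walks, extracting a ray from the unbounded component $V_2$ via K\"onig's lemma, and the coherence statement that equivalent rays eventually lie in the same complementary component of a bounded set (which needs bounded sets to be finite, and is also what underlies the equivalence of the two definitions of ends). The theorem as stated carries no local finiteness hypothesis, and the paper's component-counting argument avoids it entirely; to match that generality you would either need to add the hypothesis or replace your end-theoretic step by the paper's three-components observation. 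With local finiteness granted, your argument is complete modulo the separation/nesting induction you correctly identify as routine.
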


\begin{proof}
Using Corollary \ref{coarse separation corollary}, let $r > 0$ such that any ball $B$ of radius $r$ satisfies $\abs{U(X,B)} = 2$. Fix a vertex $x_0$ and let $B_0 = B(x_0,r)$. Let $P_0$ and $N_0$ denote the two elements of $U(X,B_0)$. Pick a vertex $x_1 \in P_0$ with $d(x_0,x_1) = 2r+1$. Let $B_1 = B(x_1,r)$ and note that $B_1 \subset P_0$. Then $N_0 \cup B_0$ is an unbounded connected subgraph of $X \setminus B_1$ and thus must be contained in one of the two elements of $U(X,B_1)$. Let $N_1 \in U(X,B_1)$ denote the component containing $N_0 \cup B_0$, and let $P_1 \in U(X,B_1)$ denote the other element. Since $N_0 \cup B_0 \subset N_1$, it follows that $P_1 \subset P_0$. Then pick a vertex $x_2 \in P_1$ with $d(x_1,x_2) = 2r+1$, and similarly define $B_2,P_2$, and $N_2$ (see Figure \ref{fig:d2}).

\begin{figure}[H]
    \centering
    \includegraphics[width=100mm]{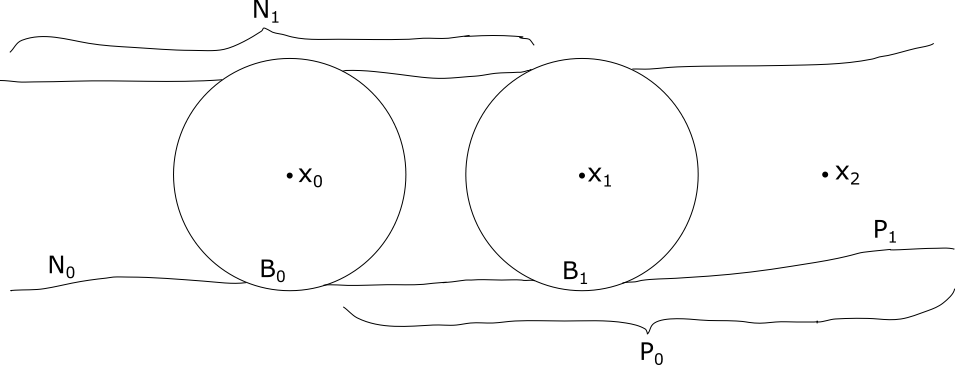}
    \caption{}
    \label{fig:d2}
\end{figure}

We continue this process, as well as a symmetric process in the direction of $N_0$ instead of $P_0$, to construct $x_k$, $B_k$, $P_k$, and $N_k$ for all $k \in \Z$, such that
\begin{itemize}
    \item $d(x_k,x_{k+1}) = 2r+1$,
    \item $B_{k+1} \cup P_{k+1} \subset P_k$,
    \item $N_k \cup B_k \subset N_{k+1}$.
\end{itemize}
Now, consider the map $\Z \to X$ given by $k \mapsto x_k$. We show that this is a bi-Lipschitz embedding. Let $m,n \in Z$ with $m < n$. By the triangle inequality,
\begin{align*}
    d(x_m,x_n) &\le
    d(x_m,x_{m+1}) + \dots + d(x_{n-1},x_n) \\&=
    (2r+1) + \dots + (2r+1) \\&=
    (2r+1)(n-m).
\end{align*}

For the other inequality, let $\gamma$ be a path between $x_m$ and $x_n$ of minimal length. Since $B_{k+1} \cup P_{k+1} \subset P_k$ and $N_k \cup B_k \subset N_{k+1}$, we have $x_n \in P_j$ and $x_m \in N_j$ for each $j= m+1,\dots,n-1$. Since $\gamma$ is a path between $N_j$ and $P_j$, it must intersect $B_j$. Hence, for each $j=m,\dots,n-1$, $\gamma$ contains a sub-path $\eta_j$ between $B_j$ and $B_{j+1}$, and since $\gamma$ has minimal length, the $\eta_j$ have non-overlapping edges (see Figure \ref{fig:d3}).
\begin{figure}[H]
    \centering
    \includegraphics[width=160mm]{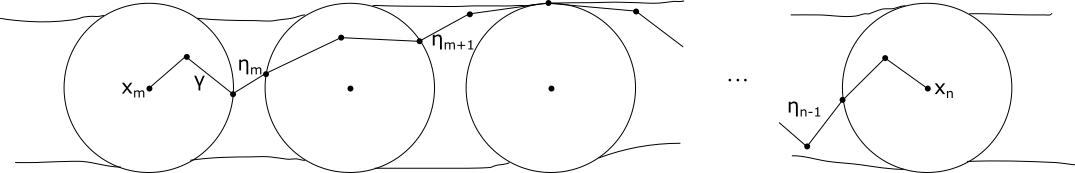}
    \caption{}
    \label{fig:d3}
\end{figure}
Since the $B_k$ are pairwise disjoint, length($\eta_j) \ge 1$. Hence,
\[
    d(x_m,x_n) =
    \text{length}(\gamma) \ge
    \sum_{j=m}^{n-1}\text{length}(\eta_j) \ge
    \sum_{j=m}^{n-1} 1 =
    n-m.
\]

It remains to show that our given map is coarsely surjective. For each $k \in \Z$, let $\gamma_k$ be a path between $x_k$ and $x_{k+1}$ of length $d(x_k,x_{k+1}) = 2r+1$, and define $L$ to be the union of the $\gamma_k$.

Let $x \in X$ be arbitrary and let $B = B(x,r)$. Let $\delta = d(x,x_0)$, and let $N \ge 3r+\delta+1$ be such that $x_N \notin B$. Let $C$ be the component of $X \setminus B$ which contains $x_N$. Suppose for contradiction that $\s{x_k}_{k\ge N}$ is not entirely contained in $C$, and let $n > N$ be the smallest index with $x_n \notin C$. Thus, $x_{n-1} \in C$, and we let $\gamma$ be a path of minimal length between $x_{n-1}$ and $x_n$. Since $x_n \notin C$, $\gamma$ must intersect $B$ at a point, say, $w$. Then
\[
    d(x_n,x_0) \le 
    d(x_n,w) + d(w,x) + d(x,x_0) \le
    (2r+1) + r + \delta =
    3r + \delta + 1,
\]
and
\[
    d(x_n,x_0) \ge
    n-0 >
    N \ge
    3r+\delta+1,
\]
which is a contradiction. Thus, $\s{x_k}_{k\ge N} \subset C$ which means that $C \in U(X,B)$. Similarly, we may assume (by taking possibly larger $N$) that $\s{x_k}_{k \le -N}$ is contained in an element of $U(X,B)$. Let $m > N$ be such that $B_m \subset C$. If $\s{x_k}_{k \le -N} \subset C$, then we would have $\abs{U(X,B \cup B_m)} \ge 3$ (see Figure \ref{fig:d4}). 

\begin{figure}[H]
    \centering
    \includegraphics[width=110mm]{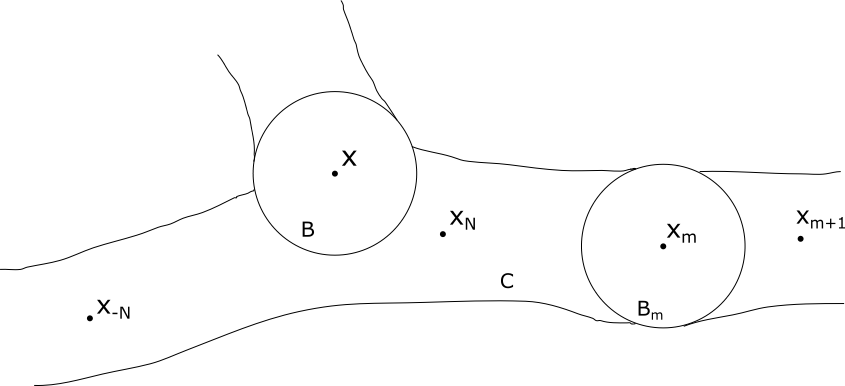}
    \caption{}
    \label{fig:d4}
\end{figure}

This would contradict $e(X) = 2$; therefore $\s{x_k}_{k \le -N}$ must be contained in the other unbounded component of $X \setminus B$. Since $x_{-N}$ and $x_N$ are in separate components of $X \setminus B$, every path between them must intersect $B$. In particular, $L$ must intersect $B$ at some point $v$. Then $v \in \gamma_k$ for some $k \in \Z$, and therefore, 
\[
    d(x_k,x) \le
    d(x_k,v) + d(v,x) \le
    (2r+1) + r =
    3r + 1.
\]
Since $x$ was chosen arbitrarily, coarse surjectivity follows.
\end{proof}
We finish this section with the coarsely transitive generalization of the classification of ends of transitive graphs.
\begin{theorem}\label{ends_coarse_transitive}
A coarsely transitive graph has either zero, one, two, or infinitely many ends.
\end{theorem}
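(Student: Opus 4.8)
\emph{Proof proposal.} The plan is to show that a coarsely transitive graph cannot have exactly $n$ ends for any finite $n \ge 3$; since $e(X)$ is either a finite integer or infinite, this immediately gives $e(X) \in \{0,1,2\}\cup\{\infty\}$. So suppose toward a contradiction that $X$ is $K$-coarsely transitive with $e(X) = n$ for some finite $n \ge 3$. First I would record that the supremum defining $e(X)$ may be taken over balls rather than over all bounded subgraphs: any bounded subgraph $B$ sits inside a ball $B'$, every connected component of $X\setminus B'$ lies inside a component of $X\setminus B$, and a short check (each unbounded component of $X\setminus B$ contains an unbounded component of $X\setminus B'$, and distinct ones contain disjoint families) shows $\abs{U(X,B')}\ge\abs{U(X,B)}$. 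Hence there is a ball with exactly $n$ unbounded complementary components. Feeding this ball into Proposition \ref{coarse separation} produces a radius $R$ such that every ball of radius $R$ has at least $n$ — and therefore, since $e(X)=n$, exactly $n$ — unbounded complementary components.

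The heart of the argument is a doubling step. Fix any ball $B_0 = B(x_0,R)$ and let $C_1,\dots,C_n$ be the unbounded components of $X\setminus B_0$. Since $C_1$ is unbounded, choose $x_1\in C_1$ with $d(x_0,x_1)>2R$; then $B_1 = B(x_1,R)$ is disjoint from $B_0$, and being connected and disjoint from $B_0$ it lies entirely inside $C_1$. Let $D_1,\dots,D_n$ be the unbounded components of $X\setminus B_1$, labeled so that $B_0\subseteq D_1$: this is legitimate because $B_0\cup C_2\cup\dots\cup C_n$ is connected (each $C_i$ is joined to $B_0$ by an edge) and disjoint from $B_1$, hence lies in a single component of $X\setminus B_1$; consequently $D_2,\dots,D_n\subseteq C_1$. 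I then claim that $C_2,\dots,C_n,D_2,\dots,D_n$ are $2(n-1)$ pairwise distinct unbounded components of $X\setminus(B_0\cup B_1)$: each $C_i$ with $i\ge 2$ is connected, unbounded, disjoint from $B_0\cup B_1$, and maximal with these properties (any vertex outside $B_0$ adjacent to $C_i$ already lies in $C_i$), and symmetrically for each $D_j$ with $j\ge 2$, interchanging the roles of $B_0$ and $B_1$; finally the $C_i$ are disjoint from $C_1\supseteq D_j$, so the two lists do not overlap. Since $B_0\cup B_1$ is a bounded subgraph, this yields $e(X)\ge 2(n-1)=2n-2>n$ (using $n\ge 3$), contradicting $e(X)=n$.

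The only place where genuine content enters is the passage from the existence of a \emph{single} ball separating $X$ into $n$ pieces to the statement that \emph{every} sufficiently large ball does so — this is exactly Proposition \ref{coarse separation}, and it is the sole point at which coarse transitivity is invoked; without it a ball buried deep inside some $C_i$ need not separate $X$ at all, and the doubling step would have nothing to work with. Everything else is elementary bookkeeping about connected components of complements of bounded sets, entirely parallel to the argument in the proof of the two-ended case above.
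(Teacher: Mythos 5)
Your proposal is correct and follows essentially the same route as the paper: the sole substantive input is Proposition \ref{coarse separation}, and the combinatorial core---placing a second separating ball inside an unbounded complementary component of the first and counting the resulting components of the complement of the union---is exactly the paper's construction. The only difference is cosmetic: you perform one doubling step and contradict the assumed finiteness $e(X)=n\ge 3$ via $2(n-1)>n$, whereas the paper iterates the same step indefinitely to conclude $e(X)=\infty$ directly; your preliminary reduction from bounded subgraphs to balls is a point the paper uses implicitly.
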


\begin{proof}
Let $X$ be a coarsely transitive graph with more than two ends. By Proposition \ref{coarse separation}, there is an $r > 0$ such that any ball $B$ of radius $r$ satisfies $\abs{U(X,B)} \ge 3$. Fix a vertex $x_0 \in X$, and let $B_0 = B(x_0,r)$. Then $U(X,B_0)$ has at least three elements, which we call $U_0,V_0,W_0$. Pick a vertex $x_1 \in W_0$ with $d(x_0,x_1) = 2r+1$. Let $B_1 = B(x_1,r)$ and note that $B_1 \subset W_0$. Then $B_0 \cup U_0 \cup V_0$ is an unbounded connected subgraph of $X \setminus B_1$, and hence is contained in an element, say $U_1$, of $U(X,B_1)$. Let $V_1$ and $W_1$ denote two other elements of $U(X,B_1)$. Then $U_0,V_0,V_1,W_1 \in U(X,B_0 \cup B_1)$ which implies that $\abs{U(X,B_0\cup B_1)} \ge 4$ (Figure \ref{fig:d5}).
\begin{figure}[H]
    \centering
    \includegraphics[width=100mm]{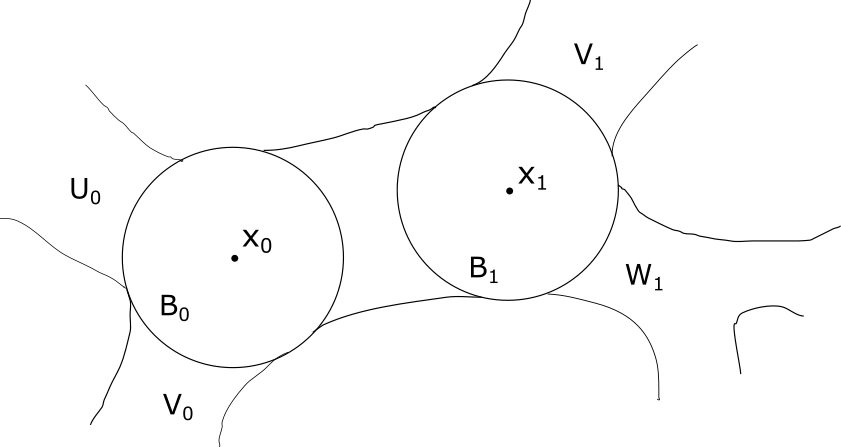}
    \caption{}
    \label{fig:d5}
\end{figure}
Continuing in this way, we pick a vertex $x_2 \in W_1$, and consider $B_2 = B(x_2,r)$. We then find that $\abs{U(X,B_0\cup B_1\cup B_2)} \ge 6$. Hence, we construct pairwise-disjoint balls $B_k$ for all $k \in \N$ so that for any $M > 0$, there is an $n \in \N$ for which $\abs{U(X,\bigcup_{k=0}^n B_k)} > 
M$. Thus, $e(X) = \infty$.
\end{proof}

\section{Quasi-isometry classes of graphs of polynomial growth}

Our goal for this section is to show that given an infinite, locally finite, connected, transitive graph $X$, there exist continuum many 3-regular graphs that are pairwise non-quasi-isometric and yet share several large-scale geometric properties. We define a \textbf{geodesic} $P$ in $X$ to be a bi-infinite path such that for any two vertices $x,y$ on $P$, $P$ contains a shortest-length path between $x$ and $y$. In other words, $d(x,y) = d_P(x,y)$ where $d$ is the path metric on $X$ and $d_P$ is the restriction of $d$ to $P$. By Theorem 4.1 in \cite{W}, every vertex in $X$ lies on a geodesic. Thus, take any geodesic $P$ in $X$ and label its vertices by $\s{x_n}_{n\in\Z}$ such that $d(x_m,x_n) = |m-n|$ for all $m,n \in \Z$. We construct a family of graphs from $X$ as follows. For each $0 < \alpha \le 1$, define $g_\alpha : (0,\infty) \to \N$ by $g_\alpha(x) = \lceil \log(x)^\alpha \rceil$ and note that $g_\alpha(n) \le n$ for all $n \in \N$. For each positive integer $n$, let $S_n^\alpha$ be the subgraph of $\Z$ induced by the vertex set $\s{k : 0 \le k \le g_\alpha(n)}$. Define
\[
    X_\alpha \defeq \parens{X \sqcup \bigsqcup_{n>0} S_n^\alpha} / \sim
\]
where for each positive integer $n$, we identify $x_{n^2} \in X$ with $0 \in S_n^\alpha$. From now on, we denote the vertex $k \in S_n^\alpha \subset X_\alpha$ by $k_n$. For example, $0_n$ and $x_{n^2}$ denote the same vertex in $X_\alpha$, and $g_\alpha(n)_n$ denotes the ``tip" of the segment $S_n^\alpha$ in $X_\alpha$. To reduce clutter, we define $t_n^\alpha = g_\alpha(n)_n$ for all $n > 0$ (see Figure \ref{fig:d6}). 

\begin{figure}[H]
    \centering
    \includegraphics[width=100mm]{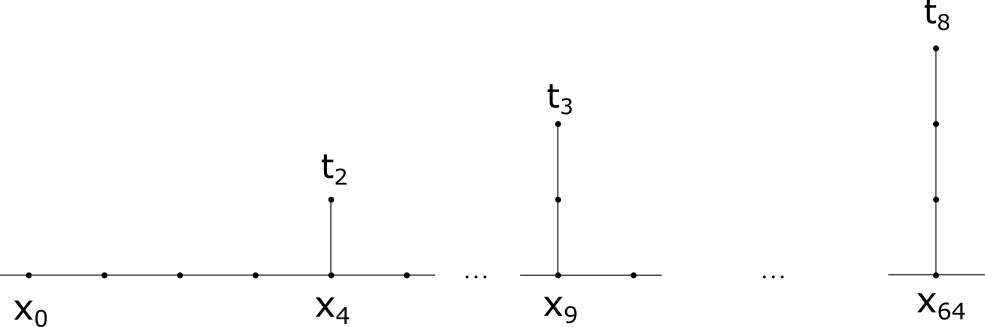}
    \caption{The graph $X_\alpha$ when $X = \Z$ and $\alpha = 1$}
    \label{fig:d6}
\end{figure}

For the remainder of this section, $X$ will denote an infinite, locally finite, connected, transitive graph, and if $0 < \alpha \le 1$, then $X_\alpha$ will denote the graph we constructed from $X$ above.

\begin{proposition}\label{invariants_same}
The graphs $X$ and $X_\alpha$ have the same number of ends, asymptotic dimension, and growth rates. 
\end{proposition}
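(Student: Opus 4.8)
The plan is to verify the three equalities in turn, using throughout one elementary fact: the inclusion $X \hookrightarrow X_\alpha$ is an isometric embedding. Each segment $S_n^\alpha$ is a finite path attached to $X$ at the single vertex $x_{n^2}$, so a shortest $X_\alpha$-path between two vertices of $X$ never enters any segment; hence $d_{X_\alpha}$ agrees with $d_X$ on $X$, and $B_{X_\alpha}(v,n) \cap X = B_X(v,n)$ for every vertex $v$ of $X$. For the same reason $d_{X_\alpha}(v, k_m) = d_X(v, x_{m^2}) + k$ whenever $k_m$ is an interior vertex of $S_m^\alpha$.

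\emph{Growth rate.} Use the basepoint $x_0$, the origin of $P$ (legitimate, since the growth rate is independent of the basepoint). The inclusion gives $f_X \le f_{X_\alpha}$ pointwise, so $f_X \preceq f_{X_\alpha}$. Conversely, $B_{X_\alpha}(x_0,n)$ consists of $B_X(x_0,n)$ together with the interior vertices of the segments $S_m^\alpha$ with $d_X(x_0,x_{m^2}) = m^2 \le n$; each such segment has at most $g_\alpha(m) \le m$ interior vertices, so $f_{X_\alpha}(n) - f_X(n) \le \sum_{1 \le m \le \sqrt{n}} m = O(n)$ (indeed $O(\sqrt{n}\,(\log n)^\alpha)$). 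Since $X$ is an infinite connected graph, $f_X(n) \ge n+1$, so $f_{X_\alpha}(n) \le C f_X(n)$ for all large $n$ and a suitable constant $C$; with the trivial estimates for small $n$ this gives $f_{X_\alpha} \preceq f_X$, hence $f_X \approx f_{X_\alpha}$.

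\emph{Ends.} I use the description of ends via rays. The key observation is that every ray in $X_\alpha$ has a tail contained in $X$: a ray is a simple path, so once it moves from $x_{m^2}$ into the interior of $S_m^\alpha$ it cannot return (that would revisit $x_{m^2}$) and must terminate at the tip $t_m^\alpha$, contradicting infiniteness; and a ray beginning in a segment interior descends immediately to the base and thereafter stays in $X$. Hence the inclusion $X \hookrightarrow X_\alpha$ induces a bijection on ends: every end of $X_\alpha$ is represented by a ray with tail in $X$, and two rays in $X$ are equivalent in $X_\alpha$ if and only if they are equivalent in $X$, since a ray joining them in $X_\alpha$ also has a tail in $X$. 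Therefore $e(X) = e(X_\alpha)$.

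\emph{Asymptotic dimension.} Since $\mathrm{asdim}$ is monotone under passage to subspaces, the isometric embedding gives $\mathrm{asdim}(X) \le \mathrm{asdim}(X_\alpha)$. For the reverse inequality I would write $X_\alpha = X \cup B$ with $B = \bigcup_{m>0} S_m^\alpha$, both viewed as subspaces, and first show $\mathrm{asdim}(B) \le 1$: the roots $x_{m^2}$ sit at the positions $m^2$ along a geodesic, so consecutive roots are $2m+1$ apart and the root set has asymptotic dimension $0$, while the attached segments have length only $g_\alpha(m) = \lceil (\log m)^\alpha \rceil$, so at any scale $R$ all but finitely many of the $S_m^\alpha$ become pairwise $R$-separated; covering those segments by intervals of length about $R$ and putting the remaining bounded part of $B$ into a single set produces a uniformly bounded cover of $B$ of multiplicity at most $2$ at scale $R$. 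As $X$ is an infinite connected graph, $\mathrm{asdim}(X) \ge 1$, so the finite union theorem for asymptotic dimension gives $\mathrm{asdim}(X_\alpha) = \mathrm{asdim}(X \cup B) \le \max\{\mathrm{asdim}(X), \mathrm{asdim}(B)\} = \mathrm{asdim}(X)$, completing the proof. The growth and ends arguments are essentially bookkeeping once the isometric embedding is noted; I expect the main obstacle to be this asymptotic-dimension bound — producing uniformly bounded covers of $B$ at every scale and combining them correctly with covers of $X$ (equivalently, invoking the finite union theorem in the right form). The slow, sparse growth of the attached segments, of length $\lceil (\log n)^\alpha \rceil$ at the vertex $x_{n^2}$, is exactly what makes all three parts go through.
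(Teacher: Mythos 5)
Your proof is correct and follows the same overall skeleton as the paper's (rays cannot persist inside the attached segments, so ends are unchanged; the extra vertices in a ball of radius $n$ number $O(n) \le O(f_X(n))$, so growth is unchanged; and the finite union theorem reduces the asymptotic dimension claim to bounding $\mathrm{asdim}$ of the attached part by $1$). The one place you genuinely diverge is the last step: the paper bounds $\mathrm{asdim}$ of $Y = P \cup \bigcup_n S_n^\alpha$ by exhibiting an isometric embedding of $Y$ into the $3$-regular tree (map $P$ onto a bi-infinite geodesic and each $S_n^\alpha$ onto a disjoint ray emanating from the corresponding vertex), which gives $\mathrm{asdim}(Y)\le 1$ with no cover construction at all. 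You instead take $B=\bigcup_m S_m^\alpha$ and build covers by hand using the fact that at any scale $R$ all but finitely many segments are pairwise $R$-separated. That works, but as written the bookkeeping is slightly loose: lumping the finitely many ``close'' segments into a single set can let an $R$-ball near that set meet it plus two sets from an adjacent far segment, giving $R$-multiplicity $3$ rather than $2$. This is easily repaired (the exceptional part is bounded, so one can enlarge $R$-intervals to absorb it, or invoke the Bell--Dranishnikov infinite union theorem with the bounded exceptional set playing the role of $Y_r$), but the paper's tree embedding sidesteps the issue entirely and is the cleaner route; your decomposition $X \cup B$ versus the paper's $X \cup Y$ is otherwise immaterial since both unions cover $X_\alpha$ and both require the observation that $\mathrm{asdim}(X)\ge 1$ for an unbounded graph.
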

\begin{proof}
Recall that the ends of a graph are given by equivalence classes of rays (semi-infinite paths with no self-intersection). Let $r$ be an arbitrary ray in $X_\alpha$. Since rays are infinite and do not have repeating vertices, $r$ can intersect $S_n^\alpha$ only at $0_n = x_{n^2} \in X$. Hence, $r$ is a ray in $X \subset X_\alpha$. Hence, all rays in $X_\alpha$ are just rays in $X$, and thus, the ends of $X_\alpha$ are identified with the ends of $X$. Therefore, $e(X_\alpha) = e(X)$.

Let $P$ be the geodesic in $X$ with respect to which $X_\alpha$ is defined, and consider the subgraph of $X_\alpha$ defined by
\[
Y \defeq P \cup \bigcup_{n > 0} S_n^\alpha.
\]
For example, if $X = \Z$, then $Y$ is all of $X_\alpha$. We claim that $Y$ isometrically embeds into the 3-regular tree $T$. Indeed, let $\gamma_0$ be a geodesic in $T$ with vertices $\s{v_k}_{k\in\Z}$, and map $P$ isometrically onto $\gamma_0$. Then for each $n \in \N$, since $T$ is a 3-regular tree, we may take a (necessarily geodesic) ray $\gamma_n$, which emanates from $v_n$ and does not intersect $\gamma_0$ elsewhere. Note that since $T$ is a tree, the rays $\gamma_n$ are pairwise disjoint. Then we may isometrically embed $S_n^\alpha$ into $\gamma_n$ with the condition that $0 \in S_n^\alpha$ is mapped to $v_n$. The result is an isometric embedding of $Y$ into $T$. Since trees have asymptotic dimension 1, we have $\text{asdim}(Y) \le 1$. Then
\[
    \text{asdim}(X_\alpha) = 
    \text{asdim}(X \cup Y) \le
    \max\s{\text{asdim}(X),\text{asdim}(Y)} =
    \text{asdim}(X).
\]
On the other hand, $\text{asdim}(X) \le \text{asdim}(X_\alpha)$ because $X \subset X_\alpha$. Thus $\text{asdim}(X_\alpha) = \text{asdim}(X)$.

Lastly, we show that $X$ and $X_\alpha$ have the same rate of growth. First, note that since $X \subset X_\alpha$, we have $\abs{B_X(x_0,n)} \le \abs{B_{X_\alpha}(x_0,n)}$. Next, we show that $S_{X_\alpha}(x_0,n)$ has at most one more element than $S_X(x_0,n)$, in which case, 
\begin{align*}
    \abs{B_{X_\alpha}(x_0,n)} &= 
    1 + \sum_{i=1}^n \abs{S_{X_\alpha}(x_0,i)} \\&\le 
    1 + \sum_{i=1}^n \parens{\abs{S_{X}(x_0,i)} + 1} \\&=
    \abs{B_X(x_0,n)} + n \\&\le
    2\abs{B_X(x_0,n)},
\end{align*}
where $n \le \abs{B_X(x_0,n)}$ because $\s{x_1,\dots,x_n} \subset B_X(x_0,n)$. First note that $d(x_0,j_m) < d(x_0,k_m)$ for all $m > 0$ and $j < k \le g_\alpha(m)$. Moreover,
\[
   d(x_0,t_m^\alpha) =
   m^2 + g_\alpha(m) \le
   m^2+m =
   m(m+1) <
   (m+1)^2 =
   d(x_0,0_{m+1}).
\]
Since $\abs{S_{X_\alpha}(x_0,1)} = \abs{S_X(x_0,1)}$, we let $n > 1$, and let $m^2$ be the largest square such that $m^2 \le n$. Note that the intersection of $S_{X_\alpha}(x_0,n)$ with $X \subset X_\alpha$ is $S_X(x_0,n)$. Since $d(x_0,0_m) \le n$, it follows from the above observations that $S_{X_\alpha}(x_0,n)$ does not intersect $S_k^\alpha$ for any $k < m$. Also, since $d(x_0,0_{m+1}) > n$, $S_{X_\alpha}(x_0,n)$ does not intersect $S_k^\alpha$ for any $k > m$. Finally, $S_{X_\alpha}(x_0,n)$ intersects $S_m^\alpha$ once if $m^2 \le n \le m^2 + g_\alpha(m)$, and otherwise, the intersection is empty. Hence, $\abs{S_{X_\alpha}(x_0,n)} \le \abs{S_{X}(x_0,n)} + 1$. Thus,
\[
    \abs{B_X(x_0,n)} \le 
    \abs{B_{X_\alpha}(x_0,n)} \le
    2\abs{B_X(x_0,n)},
\]
which implies that $X$ and $X_\alpha$ have the same rate of growth.
\end{proof}

While $X$ and $X_\alpha$ share some large-scale geometric properties, it turns out by the following proposition that they are not quasi-isometric to each other.

\begin{proposition}\label{base prop}
If $f : X \to X_\alpha$ is a quasi-isometric embedding, then $\sup_{x\in X}d(f(x),X) < \infty$.
\end{proposition}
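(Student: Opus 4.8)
The plan is to fix an arbitrary $(L,A)$-quasi-isometric embedding $f\colon X\to X_\alpha$ and produce a constant $K_0=K_0(L,A)$, depending only on $L$ and $A$, with $d(f(x),X)\le K_0$ for every $x\in X$. The structural feature of $X_\alpha$ to exploit is that each attached segment hangs off the rest of the graph along a single edge: setting $I_n:=S_n^\alpha\setminus\{x_{n^2}\}=\{1_n,\dots,g_\alpha(n)_n\}$, any path in $X_\alpha$ from a vertex of the ``pocket'' $I_n$ to a vertex outside $I_n$ must pass through $x_{n^2}$, so $d(u,v)=d(u,x_{n^2})+d(x_{n^2},v)$ for $u\in I_n$, $v\notin I_n$; in particular $d_{X_\alpha}(u,v)=|\,\mathrm{pos}(u)-\mathrm{pos}(v)\,|$ for $u,v\in I_n$, and if $f(x)=k_n$ then $d(f(x),X)=d(f(x),x_{n^2})=k$. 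So it suffices to bound this $k$.

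Suppose $f(x)=k_n$ with $k$ large. Using Theorem~4.1 of \cite{W}, I would take a bi-infinite geodesic $(q_j)_{j\in\Z}$ in $X$ with $q_0=x$, so that $\tfrac1L|i-j|-A\le d(f(q_i),f(q_j))\le L|i-j|+A$. Since $f(q_0)=f(x)\in I_n$ and $\operatorname{diam}(I_n)=g_\alpha(n)-1<\infty$, the lower bound forces $f(q_j)\notin I_n$ once $|j|$ is large; hence $b:=\max\{j\ge 0:f(q_j)\in I_n\}$ and $a:=\min\{j\le 0:f(q_j)\in I_n\}$ are well defined, with $a\le 0\le b$. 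At $j=b$ we have $f(q_b)\in I_n$ but $f(q_{b+1})\notin I_n$ and $d(f(q_b),f(q_{b+1}))\le L+A$, so the bottleneck at $x_{n^2}$ gives $d(f(q_b),x_{n^2})\le L+A$; symmetrically $d(f(q_a),x_{n^2})\le L+A$. On the other hand $f(q_0)=f(x)$ sits at distance $k$ from $x_{n^2}$ inside $I_n$, so $d(f(q_b),f(q_0))=k-\mathrm{pos}(f(q_b))\ge k-L-A$ (assuming $k>L+A$), while $d(f(q_b),f(q_0))\le Lb+A$; thus $b\ge (k-L-2A)/L$, and likewise $|a|\ge (k-L-2A)/L$.

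Now close the loop: $d_X(q_a,q_b)=b+|a|\ge 2(k-L-2A)/L$, so $d(f(q_a),f(q_b))\ge 2(k-L-2A)/L^2-A$, whereas $f(q_a)$ and $f(q_b)$ both lie within $L+A$ of the single vertex $x_{n^2}$, so $d(f(q_a),f(q_b))\le 2(L+A)$. These two estimates are incompatible once $k$ exceeds an explicit threshold $K_0(L,A)$. The only degenerate cases are $a=0$ or $b=0$, and these are even easier: if $b=0$ then $f(q_1)\notin I_n$ and the bottleneck gives $d(f(x),x_{n^2})\le L+A$ directly, and similarly for $a=0$. Hence $k=d(f(x),X)\le\max\{K_0,L+A\}$ for all $x$, which is the claim.

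The main obstacle is choosing the right argument rather than any single computation: the naive volume comparison — a ball of radius $R$ around $f(x)$ deep in a segment is ``thin'' while $B_X(x,R/L)$ need not be — fails when $X$ has linear growth (for instance when $X$ is two-ended), because a ball of $X_\alpha$ centered deep in a segment still reaches back into the spacious graph $X$ through the attachment vertex. The dead-end-pocket/bi-infinite-geodesic argument above avoids this entirely and, crucially, produces a bound that does not depend on which segment $f(x)$ lands in; the one point requiring care is verifying the bottleneck identity for $I_n$ and checking that no hidden dependence on $g_\alpha(n)$ enters the final constant.
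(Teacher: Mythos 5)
Your proof is correct and follows essentially the same strategy as the paper's: push a bi-infinite geodesic through $x$, observe that its image must enter and exit the bounded segment through the single attachment vertex $x_{n^2}$, and derive a contradiction between the entry/exit images being within $O(L+A)$ of each other and their preimages being far apart in $X$. The only cosmetic difference is that the paper uses transitivity to reduce to $x=x_0$ and works with the fixed geodesic $P$ defining $X_\alpha$, whereas you take an arbitrary geodesic through $x$ via the same Theorem~4.1 of \cite{W}.
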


\begin{proof}
Suppose $f : X \to X_\alpha$ is an $(L,A)$-quasi-isometric embedding. Assume for contradiction that for some $x \in X$, 
\[
    d(f(x),X) > L^3 + 2L^2A + A.
\]
Let $P = \s{x_n}_{n\in\Z}$ be the geodesic with respect to which $X_\alpha$ is defined. Since $X$ is transitive, we may assume without loss of generality that $x=x_0$. Then $f(x_0) \in S_k^\alpha$ for some $k$, but since $S_k^\alpha$ is bounded, there must be an $m < 0$ such that $f(x_m) \notin S_k^\alpha$ and $f(x_{m+1}) \in S_k^\alpha$, and an $n > 0$ such that $f(x_{n-1}) \in S_k^\alpha$ and $f(x_n) \notin S_k^\alpha$ (see Figure \ref{fig:d7}). 

\begin{figure}[H]
    \centering
    \includegraphics[width = 75mm]{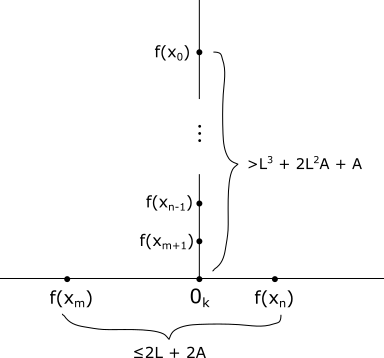}
    \caption{}
    \label{fig:d7}
\end{figure}

We then have that
\begin{align*}
    d(f(x_m),f(x_n)) &\le
    d(f(x_m),0_k) + d(0_k,f(x_n)) \\&\le
    d(f(x_m),f(x_{m+1})) + d(f(x_{n-1}),f(x_n)) \\&\le
    (L+A) + (L+A) \\&=
    2L+2A.
\end{align*}
For all $y \in X$ with $d(x_0,y) \le L^2 + 2LA$, it follows that
\[
    d(f(x_0),f(y)) \le L(L^2+2LA) + A = L^3 + 2L^2A + A.
\]
Since $f(x_m),f(x_n) \notin S_k^\alpha$ and $d(f(x_0),X) > L^3 + 2L^2A + A$, it follows that $|m|,n > L^2 + 2LA$, and hence, $n-m > 2L^2 + 4LA$. Thus,
\[
    d(f(x_m),f(x_n)) > \frac{1}{L}(2L^2+4LA) - A = 
    2L+3A,
\]
which is a contradiction. Therefore, $\sup_{x\in X}d(f(x),X) \le L^3 + 2L^2A + A$.
\end{proof}

\begin{corollary}
$X$ and $X_\alpha$ are not quasi-isometric for any $0 < \alpha \le 1$.
\end{corollary}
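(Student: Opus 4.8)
The plan is to derive a contradiction from coarse surjectivity. Suppose for contradiction that $X$ and $X_\alpha$ were quasi-isometric, and fix an $(L,A)$-quasi-isometry $f : X \to X_\alpha$; then $f$ is $C$-coarsely surjective for some $C \ge 0$, meaning its image is $C$-dense in $X_\alpha$. On the other hand, Proposition \ref{base prop} applies to $f$ viewed as a quasi-isometric embedding, so there is a constant $D < \infty$ with $d(f(x),X) \le D$ for every $x \in X$. The point is that the first fact forces the image of $f$ to come within bounded distance of the tips $t_n^\alpha$ of the attached segments, while the second fact confines the image to a bounded neighborhood of $X$; since the tips get arbitrarily far from $X$, these are incompatible.

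To make this precise, I would first record that for each $n > 0$ the segment $S_n^\alpha$ is glued to $X$ only at the single vertex $0_n = x_{n^2}$, so every path in $X_\alpha$ from $t_n^\alpha$ to a vertex of $X$ must pass through $0_n$; hence $d(t_n^\alpha, X) = d(t_n^\alpha, 0_n) = g_\alpha(n) = \lceil \log(n)^\alpha \rceil$. Since $\alpha > 0$, we have $g_\alpha(n) \to \infty$, so we may choose $n$ with $g_\alpha(n) > C + D$. Then for every $x \in X$, the triangle inequality gives
\[
    d(f(x), t_n^\alpha) \ge d(t_n^\alpha, X) - d(f(x), X) \ge g_\alpha(n) - D > C,
\]
so $t_n^\alpha$ lies at distance greater than $C$ from the image of $f$, contradicting $C$-density of that image. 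Therefore no such $f$ exists.

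There is no real obstacle here: the substantive work is already contained in Proposition \ref{base prop}, and the corollary only needs to combine it with the definition of coarse surjectivity and the elementary observation that the attached segments $S_n^\alpha$ have lengths $g_\alpha(n)$ tending to infinity. The one point worth stating carefully is the identity $d(t_n^\alpha, X) = g_\alpha(n)$, which uses that each $S_n^\alpha$ is a pendant segment meeting $X$ in exactly one vertex.
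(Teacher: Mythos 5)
Your argument is correct and is essentially the paper's own proof: apply Proposition \ref{base prop} to confine the image of any quasi-isometric embedding to a bounded neighborhood of $X \subset X_\alpha$, then note the tips $t_n^\alpha$ escape every such neighborhood since $d(t_n^\alpha, X) = g_\alpha(n) \to \infty$, contradicting coarse surjectivity. You have merely made the quantifiers and the triangle-inequality step explicit, which the paper leaves informal.
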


\begin{proof}
By Proposition \ref{base prop}, the image of any quasi-isometric embedding $X \to X_\alpha$ lies in a bounded neighborhood of the base graph $X \subset X_\alpha$. But in $X_\alpha$, since the lengths of the segments $S_n^\alpha$ increase without bound, the distance from the tips $t_n^\alpha$ of those segments to the base $X \subset X_\alpha$ grow arbitrarily large. Thus, any quasi-isometric embedding $X \to X_\alpha$ cannot be coarsely surjective.
\end{proof}

Our next result states that furthermore, $X_\alpha$ and $X_\beta$ are not quasi-isometric if $\alpha \ne \beta$. The proof uses a similar argument. We show that any quasi-isometric embedding $X_\alpha \to X_\beta$, for $\alpha <\beta$, fails to be coarsely surjective. If $f : X_\alpha \to X_\beta$ is a quasi-isometric embedding, then Proposition \ref{base prop} implies that $f$ maps the base graph $X \subset X_\alpha$ to a neighborhood of the base graph $X \subset X_\beta$. Hence, the segments $S_n^\alpha$ in $X_\alpha$ must be coarsely mapped to the segments $S_n^\beta$ in $X_\beta$. Now, the lengths of $S_n^\beta$ grow faster than the lengths of $S_n^\alpha$, but since the consecutive distances between the $S_n^\alpha$ grow quadratically, and $f$ distorts distances up to a fixed linear factor, we will see that the distances between the tips $t_n^\beta$ in $X_\beta$ and $f(X_\alpha)$ grow arbitrarily large.

For the proof, we first need a lemma.

\begin{lemma}\label{limit_lemma}
Let $0 <\alpha < \beta$. If $T \colon \R \to \R$ is affine and $p \in \mathbb{R}[x]$ is a polynomial such that $p > 0$ on $(0,\infty)$, then 
\[
    \lim_{x\to\infty}
    \frac{T(g_\alpha(p(x)))}{g_\beta(x)} =  0.
\]
\end{lemma}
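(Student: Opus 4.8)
The plan is to reduce the statement to the elementary fact that $(\log x)^{\alpha}=o\bigl((\log x)^{\beta}\bigr)$ when $\alpha<\beta$, after first controlling how $g_\alpha\circ p$ grows. I would begin by disposing of the degenerate case in which $p$ is a constant polynomial. Then $p\equiv c$ for some $c>0$, so $g_\alpha(p(x))$ is independent of $x$ and $T(g_\alpha(p(x)))$ is a fixed real number, while $g_\beta(x)=\lceil\log(x)^{\beta}\rceil\to\infty$; hence the ratio tends to $0$ in this case.

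So suppose $\deg p = d\ge 1$. Since $p>0$ on $(0,\infty)$ and $p$ is a polynomial, its leading coefficient must be positive, so $p(x)\to+\infty$ and, comparing with the leading term, there is a constant $c_1>0$ with $\log p(x)\le c_1\log x$ for all sufficiently large $x$; hence $\log(p(x))^{\alpha}\le c_1^{\alpha}(\log x)^{\alpha}$ for large $x$. Writing the affine map as $T(t)=at+b$ and using $g_\alpha(y)=\lceil\log(y)^{\alpha}\rceil\le\log(y)^{\alpha}+1$, I would then bound, for all large $x$,
\[
    \abs{T(g_\alpha(p(x)))} \le |a|\,g_\alpha(p(x)) + |b| \le |a|\bigl(\log(p(x))^{\alpha}+1\bigr)+|b| \le |a|\bigl(c_1^{\alpha}(\log x)^{\alpha}+1\bigr)+|b|.
\]
Since $(\log x)^{\alpha}\to\infty$, the right-hand side is at most $c_2(\log x)^{\alpha}$ for some constant $c_2$ and all large $x$.

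Finally, since $g_\beta(x)=\lceil\log(x)^{\beta}\rceil\ge\log(x)^{\beta}>0$ for large $x$, I would conclude
\[
    \abs{\frac{T(g_\alpha(p(x)))}{g_\beta(x)}} \le \frac{c_2(\log x)^{\alpha}}{(\log x)^{\beta}} = c_2(\log x)^{\alpha-\beta},
\]
which tends to $0$ as $x\to\infty$ because $\alpha-\beta<0$. This proves the claim. There is no serious obstacle here; the only points requiring a little care are the bookkeeping around the ceiling functions, the passage to absolute values to handle a possibly negative slope $a$, and the remark that a polynomial positive on $(0,\infty)$ is either a positive constant or diverges to $+\infty$ with $\log p(x)=O(\log x)$.
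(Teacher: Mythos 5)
Your proof is correct and follows essentially the same route as the paper's: both arguments come down to the two facts that $\log p(x) = O(\log x)$ and that $(\log x)^{\alpha-\beta}\to 0$. The only cosmetic difference is that you work with explicit upper and lower bounds (treating the constant-polynomial case separately), whereas the paper factors the ratio as $\frac{g_\alpha(p(x))}{g_\alpha(x)}\cdot\frac{g_\alpha(x)}{g_\beta(x)}$ and evaluates the two limits as $d^\alpha$ and $0$.
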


\begin{proof}
Firstly, $\log(x)^{\alpha-\beta} \to 0$ as $x\to\infty$, and thus, $g_\alpha(x)/g_\beta(x) \to 0$ as $x\to\infty$. Let $d = \deg p$. Since $\lim_{x\to\infty}\log(p(x))/\log(x) \to d$, we have that $g_\alpha(p(x))/g_\alpha(x) \to d^\alpha$ as $x\to\infty$. Hence,
\[
    \lim_{x\to\infty} \frac{g_\alpha(p(x))}{g_\beta(x)} =
    \lim_{x\to\infty} \parens{\frac{g_\alpha(p(x))}{g_\alpha(x)} \cdot \frac{g_\alpha(x)}{g_\beta(x)}} =
    \lim_{x\to\infty} \frac{g_\alpha(p(x))}{g_\alpha(x)} \cdot \lim_{x\to\infty} \frac{g_\alpha(x)}{g_\beta(x)} =
    d^\alpha \cdot 0 =
    0.
\]
Since $T$ is affine and $g_\beta(x) \to \infty$ as $x\to\infty$, we obtain the desired equality.
\end{proof}

\begin{proposition}\label{distinct_quasi}
For $0 < \alpha, \beta \le 1$, if $\alpha \ne \beta$, then $X_\alpha$ and $X_\beta$ are not quasi-isometric.
\end{proposition}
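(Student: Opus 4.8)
We may assume without loss of generality that $\alpha < \beta$, and we argue by contradiction: suppose $f \colon X_\alpha \to X_\beta$ is a quasi-isometry, hence an $(L,A)$-quasi-isometric embedding whose image is $C$-dense for some $C \ge 0$. The plan is to refute the $C$-density by showing that the tips $t_m^\beta$ of the segments $S_m^\beta \subset X_\beta$ leave every bounded neighborhood of $f(X_\alpha)$, i.e. $d(t_m^\beta, f(X_\alpha)) \to \infty$ as $m \to \infty$. Two preliminary observations set things up. First, the inclusion $X \hookrightarrow X_\gamma$ is an isometric embedding for $\gamma \in \{\alpha,\beta\}$ — any path in $X_\gamma$ joining two vertices of $X$ that enters an attached segment must leave it through the same vertex $0_n$, so it cannot be a shortest path — whence $f|_X \colon X \to X_\beta$ is again an $(L,A)$-quasi-isometric embedding, and Proposition~\ref{base prop} (applied with $\beta$ in place of $\alpha$) furnishes a constant $D \ge 0$ with $d_{X_\beta}(f(x), X) \le D$ for all $x \in X$. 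Second, from the construction one reads off $d_{X_\alpha}(x_0, k_n) = n^2 + k$ (for $0 \le k \le g_\alpha(n)$), $d_{X_\beta}(t_m^\beta, X) = g_\beta(m)$, and $d_{X_\beta}(x_0, t_m^\beta) = m^2 + g_\beta(m)$. Write $p_0 = f(x_0)$ and $b = d_{X_\beta}(p_0, x_0)$.

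Now fix $R > 0$; I would show $d_{X_\beta}(f(z), t_m^\beta) > R$ for every $z \in X_\alpha$, provided $m$ is large enough, by splitting into cases on $z$. If $z \in X$, then $d(f(z),X)\le D$ and $d(t_m^\beta, X) = g_\beta(m)$ give $d(f(z), t_m^\beta) \ge g_\beta(m) - D$. If $z = k_n \in S_n^\alpha$ with $n$ \emph{large}, meaning $n^2 > L(b + m^2 + g_\beta(m) + A + R)$, then $d(f(z), p_0) \ge \tfrac1L\,d_{X_\alpha}(x_0,z) - A \ge \tfrac1L n^2 - A > b + m^2 + g_\beta(m) + R$, while $d(p_0, t_m^\beta) \le b + d_{X_\beta}(x_0, t_m^\beta) = b + m^2 + g_\beta(m)$; subtracting gives $d(f(z), t_m^\beta) > R$. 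If instead $z = k_n$ with $n$ \emph{small}, $n^2 \le L(b + m^2 + g_\beta(m) + A + R)$, then $f(z)$ lies within $L\,g_\alpha(n) + A$ of $f(x_{n^2})$, which lies within $D$ of $X$, so $d(f(z), t_m^\beta) \ge g_\beta(m) - D - L\,g_\alpha(n) - A$; since $g_\beta(m) \le m$ and $g_\alpha$ is nondecreasing, $g_\alpha(n) \le g_\alpha(q(m))$ where $q(x) = L\bigl(b + x^2 + x + A + R\bigr)$ is a polynomial positive on $(0,\infty)$, so $d(f(z), t_m^\beta) \ge g_\beta(m) - \bigl(L\,g_\alpha(q(m)) + D + A\bigr)$.

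It then remains to see that this last quantity exceeds $R$ eventually, and this is exactly what Lemma~\ref{limit_lemma} is for: applied with the polynomial $q$ and the affine map $T(x) = Lx + D + A + R$, its conclusion $T(g_\alpha(q(m)))/g_\beta(m) \to 0$ yields $L\,g_\alpha(q(m)) + D + A + R < g_\beta(m)$ for all large $m$, which is precisely the desired bound in the ``small $n$'' case; moreover this inequality forces $g_\beta(m) > D + R$, covering the other two cases. Thus for all sufficiently large $m$, no point of $f(X_\alpha)$ is within $R$ of $t_m^\beta$, so $d(t_m^\beta, f(X_\alpha)) \ge R$; taking $R > C$ contradicts $C$-density, so $X_\alpha$ and $X_\beta$ are not quasi-isometric.

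The crux of the argument is the dichotomy between ``large $n$'' and ``small $n$'': since the segment indices $n$ are unbounded there is no uniform estimate, and one must exploit the quadratic spacing $n \mapsto n^2$ of the attachment points. The point is that for $f(k_n)$ to come within $R$ of $t_m^\beta$ one needs $L\,g_\alpha(n)$ to be roughly as large as $g_\beta(m)$, i.e. $\log(n)^\alpha \gtrsim \log(m)^\beta$, which because $\beta/\alpha > 1$ forces $n$ to be super-polynomial in $m$ — and then $z = k_n$ already lies far beyond the $\sim m^2$ ball around $p_0$ in which the relevant part of $X_\beta$ sits. Making this two-scale comparison precise is the only real difficulty, and Lemma~\ref{limit_lemma} packages it.
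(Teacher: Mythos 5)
Your proof is correct and follows essentially the same route as the paper's: show the tips $t_m^\beta$ escape every bounded neighborhood of $f(X_\alpha)$ via Proposition~\ref{base prop} and Lemma~\ref{limit_lemma}, splitting $X_\alpha$ into the base graph, nearby segments, and faraway segments. Your handling of the faraway case by a direct triangle inequality through $p_0 = f(x_0)$ is slightly cleaner than the paper's argument (which notes that a point near the tip must lie on $S_m^\beta$ and hence on every path from $f(x_0)$ to the tip), but the decomposition and the use of both auxiliary results are the same.
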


\begin{proof}
Assume $\alpha < \beta$, and let $f : X_\alpha \to X_\beta$ be an $(L,A)$-quasi-isometric embedding. Since $f$ is arbitrary, we are done if we show that $f$ is not coarsely surjective. Recall that for $0 < \gamma \le 1$, we denote the ``tip" of the segment $S_n^\gamma \subset X_\gamma$ by $t_n^\gamma$. Let $M > 0$. We show that if $n$ is sufficiently large, then $d(f(X_\alpha),t_n^\beta) > M$.

Since $f\vert_X$ is a quasi-isometric embedding $X \to X_\beta$, Proposition \ref{base prop} implies there is a $D > 0$ such that $\sup_{x \in X}d(f(x),X) \le D$. By Lemma \ref{limit_lemma}, we have
\[
    \lim_{x\to\infty} \frac{Lg_\alpha(L(x+2x^2+A)) + A}{g_\beta(x)} \to 0.
\]
In particular, there is an $N > 0$ such that for all $x \ge N$, we have that
\[
    g_\beta(x) > Lg_\alpha(L(x+2x^2+A)) + A + (M + D).
\]
Fix an integer $n \ge N$ large enough so that $d(f(x_0),0_n) \le 2n^2$, and set $R  = L(n + d(f(x_0),0_n) + A)$. Then for all $x \in X_\alpha$ with $d(x_0,x) > R$, we have that
\begin{align*}
    d(f(x_0),f(x)) &\ge 
    \frac{1}{L}d(x_0,x) - A \\&>
    n + d(f(x_0),0_n) \\&\ge
    g_\beta(n) + d(f(x_0),0_n) \\&\ge
    d(f(x_0),t_n^\beta).
\end{align*}
We now claim that $d(f(x),t_n^\beta) > M$. Since $g_\beta(n)>M+D$, if $d(f(x),t_n^\beta) \le M$ then $f(x) \in S_n^\beta$, and in particular, $f(x)$ is on every path between $f(x_0)$ and $t_n^\beta$. However, this would imply $d(f(x_0),f(x)) \le d(f(x_0),t_n^\beta)$. 

It remains to show that $d(f(x),t_n^\beta) > M$ for all $x \in X_\alpha$ with $d(x_0,x) \le R$. Observe that
\[
    B_{X_\alpha}(x_0,R) \subset 
    B_X(x_0,R) \cup \bigcup_{i=1}^{\lfloor \sqrt{R} \rfloor} S_i^\alpha.
\]
If $x \in B_X(x_0,R)$, then we have $x \in X$ and thus, $d(f(x),X) \le D$. That implies $d(f(x),t_n^\beta) > M$ because $g_\beta(n) > M+D$. On the other hand, if $x \in S_k^\alpha$, where $k^2 \le R$, then
\begin{align*}
    d(f(0_k),f(x)) &\le
    Ld(0_k,x) + A \\&\le
    Lg_\alpha(k) + A \\&\le
    Lg_\alpha(R) + A \\&\le
    Lg_\alpha(L(n+2n^2+A)) + A.
\end{align*}
Therefore,
\begin{align*}
    d(f(x),t_n^\beta) &\ge 
    d(f(0_k),t_n^\beta) - d(f(0_k),f(x)) \\&\ge
    (g_\beta(n) - D) - (Lg_\alpha(L(n+2n^2+A)) + A) \\&>
    M.
\end{align*}
Thus, $d(f(x),t_n^\beta) > M$ for all $x \in X_\alpha$ with $d(x_0,x) \le R$. We have now shown that $d(f(x),t_n^\beta) > M$ for all $x \in X_\alpha$. Since $M$ can be arbitrarily large, $f$ is not coarsely surjective.
\end{proof}

\begin{theorem} \label{main theorem}
Given an infinite, locally finite, connected, transitive graph $X$, there exist continuum many pairwise non-quasi-isometric 3-regular graphs that have the same growth rate, number of ends, and asymptotic dimension as $X$.

In particular, given any infinite, finitely generated nilpotent group $G$, there exist continuum many pairwise non-quasi-isometric $3$-regular graphs that have the same degree of polynomial degree of growth, number of ends, and asymptotic dimension as $G$. 
\end{theorem}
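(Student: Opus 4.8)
The plan is to observe that the family $\{X_\alpha : 0 < \alpha \le 1\}$ already does almost everything required, and then to upgrade it to a family of $3$-regular graphs. Indeed, by Proposition \ref{invariants_same} each $X_\alpha$ has the same number of ends, asymptotic dimension, and growth rate as $X$; by Proposition \ref{distinct_quasi} the graphs $X_\alpha$ are pairwise non-quasi-isometric; and there are continuum many values of $\alpha$. Since these three geometric invariants are preserved under quasi-isometry and ``is quasi-isometric to'' is an equivalence relation, it suffices to show that every $X_\alpha$ is quasi-isometric to some $3$-regular graph $Y_\alpha$: the $Y_\alpha$ will then be pairwise non-quasi-isometric $3$-regular graphs having the same number of ends, asymptotic dimension, and growth rate as $X$.

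The step I would isolate is the elementary fact that \emph{every infinite, connected, locally finite graph of bounded degree is quasi-isometric to a $3$-regular graph}. (Note that $X_\alpha$ has bounded degree: since $X$ is transitive it is $k$-regular for some finite $k \ge 2$, the interior vertices of the attached segments $S_n^\alpha$ have degree at most $2$, the tips have degree $1$, and the attaching vertices $x_{n^2}$ have degree $k+1$.) I would prove this in two stages. First, reduce the maximum degree to $3$ by replacing each vertex $v$ of degree $d \ge 4$ with a cycle of length $d$, reattaching the $d$ edges formerly incident to $v$ one to each vertex of that cycle; for a graph of bounded degree this produces a quasi-isometric graph of maximum degree $3$. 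Second, to remove the surviving vertices of degree $1$ and $2$, fix once and for all a finite gadget $A$ that is $3$-regular except at a single distinguished vertex $a$ of degree $2$ — for instance the triangular prism $K_3 \times K_2$ with one of its edges subdivided. At each vertex of degree $2$ attach a fresh copy of $A$ by a new edge to $a$, and at each vertex of degree $1$ attach two fresh copies of $A$ by two such edges; every vertex then has degree exactly $3$. The point to verify is that each attached copy of $A$ meets the rest of the graph in a single vertex, so no shortest path between two un-attached vertices ever enters a gadget; hence the original graph embeds isometrically into the enlarged one, and the embedding is coarsely surjective because the gadgets have uniformly bounded diameter. Thus the enlarged graph is $3$-regular and quasi-isometric to the one we started with.

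Granting this, I would finish as follows. For each $\alpha \in (0,1]$ apply the fact to $X_\alpha$ to get a $3$-regular graph $Y_\alpha$ with $Y_\alpha \qi X_\alpha$. By quasi-isometry invariance of the three properties together with Proposition \ref{invariants_same}, $Y_\alpha$ has the same number of ends, asymptotic dimension, and growth rate as $X$. If $Y_\alpha \qi Y_\beta$ for some $\alpha \ne \beta$, then composing with the quasi-isometries $X_\alpha \qi Y_\alpha$ and $Y_\beta \qi X_\beta$ would give $X_\alpha \qi X_\beta$, contradicting Proposition \ref{distinct_quasi}; so the $Y_\alpha$ are pairwise non-quasi-isometric, and there are continuum many of them. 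For the ``in particular'' clause, take $X$ to be a Cayley graph of the infinite finitely generated nilpotent group $G$ with respect to a finite generating set: it is infinite, connected, locally finite, and transitive, so the first part applies. By the Bass--Guivarc'h theorem $X$ (hence $G$) has polynomial growth of an integer degree $d$; being virtually polycyclic, $G$ has finite asymptotic dimension; and being infinite and finitely generated, $G$ has one or two ends. Each $Y_\alpha$ shares all three invariants with $X$, hence with $G$.

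I expect the only real obstacle to be the $3$-regularization fact, and within it the single subtlety noted above: blowing vertices up into cycles handles large degrees but does nothing for vertices of degree $1$ or $2$, so the pendant-gadget step is genuinely needed, and one must check that attaching bounded pendant gadgets changes neither the metric on the original vertex set nor, therefore, the quasi-isometry type. Everything else is a routine assembly of Propositions \ref{invariants_same} and \ref{distinct_quasi} with the quasi-isometry invariance of growth, ends, and asymptotic dimension.
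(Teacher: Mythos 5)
Your proposal is correct and follows essentially the same route as the paper: combine Proposition \ref{invariants_same} and Proposition \ref{distinct_quasi} with the fact that each $X_\alpha$ is quasi-isometric to a $3$-regular graph, and use quasi-isometry invariance of growth, ends, and asymptotic dimension. The only difference is that the paper black-boxes the $3$-regularization step by citing a known theorem (every graph is quasi-isometric to a $3$-regular graph), whereas you supply a correct self-contained proof of it for bounded-degree graphs via cycle blow-ups and pendant gadgets, which suffices since $X_\alpha$ has bounded degree.
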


\begin{proof}
By Proposition \ref{invariants_same}, each element of the set $\s{X_\alpha : 0 < \alpha \le 1}$ has the same number of ends, asymptotic dimension, and growth rate as $X$. By Proposition \ref{distinct_quasi}, they are all in distinct quasi-isometry classes. Finally, by Theorem 19 in \cite{BRSV}, every graph is quasi-isometric to a 3-regular graph.
\end{proof}

\section{Further Questions}
We finish this article by discussing some open questions concerning the geometry of coarsely transitive graphs. We start with the following question.\\

\noindent  {\bf Question 1} Does there exist a coarsely transitive graph that is not quasi-isometric to a transitive graph? If so, can we ensure that it is locally finite?\\

From the definition of vertex-transitivity, we have that the automorphism group of a transitive graph is always nontrivial. A similar property holds for coarsely transitive graphs in that the group of quasi-isometries is always nontrivial. Thus, if one could construct a coarsely transitive graph where every graph in its quasi-isometry class has a trivial automorphism group, one would have an answer to the above question. In particular such a graph would have coarse symmetries but no actual symmetries. 

Another question one may consider is whether there are examples of polynomially growing graphs, which fail to be quasi-isometric to any finitely generated group, but have more symmetries than the ones found in Theorem \ref{main theorem}. After all, one can see that by Proposition \ref{obstruction to coarse transitivity}, the constructed graphs $X_\alpha$ are not even coarsely transitive. Therefore, we have the following question.\\

\noindent  {\bf Question 2} Does there exist a locally finite, coarsely transitive graph of polynomial growth and finite asymptotic dimension that is not quasi-isometric to any finitely generated group?\\

In light of Trofimov's result, a positive answer to Question 2 would provide a positive answer to Question 1. If such a graph exists, it would be the most symmetric one could hope to have for a locally finite graph of polynomial growth which fails to be quasi-isometric to a finitely generated group. 
\bibliography{bib.bib}
\bibliographystyle{alpha}

\end{document}